\theoremstyle{plain}
\newtheorem{theorem}{Theorem}[section]
\newtheorem{lemma}[theorem]{Lemma}
\newtheorem{proposition}[theorem]{Proposition}
\crefname{fact}{Fact}{Facts}
\crefname{claim}{Claim}{Claims}
\newtheorem*{claim*}{Claim}
\theoremstyle{definition}
\crefname{question}{Question}{Questions}
\theoremstyle{remark}
\newtheorem{remark}[theorem]{Remark}
\let\c@equation\c@theorem
\numberwithin{equation}{section}
\newcommand{\define}[1]{\emph{#1}}
\newcommand{\sd}[1]{}
\newcommand{\gw}[1]{}
\newcommand{\R}{\mathbb{R}}
\newcommand{\Z}{\mathbb{Z}}
\newcommand{\C}{\mathbb{C}}
\newcommand{\N}{\mathbb{N}}
\newcommand{\floor}[1]{\left\lfloor#1\right\rfloor}
\newcommand{\norm}[1]{\left\Vert#1\right\Vert}
\newcommand{\abs}[1]{\left\vert#1\right\vert}
\newcommand{\inv}{^{-1}}
\newcommand{\surf}{\Sigma} % notation for a fixed surface
\newcommand{\punct}{P} % the finite set of punctures
\DeclareMathOperator{\Mod}{Mod} %mapping class group
\DeclareMathOperator{\Homeo}{Homeo} %homeomorphism group
\newcommand{\T}{{\EuScript T}} %Teichmuller space
\newcommand{\qd}{{\EuScript Q}} % quadratic differential space
\newcommand{\uqd}{\qd^1} % space of unit-area quadratic differentials
\newcommand{\ad}{{\EuScript A}} % abelian differential space
\newcommand{\diff}{{\EuScript D}} % notation for either abeilan or quadratic diffs
\newcommand{\udiff}{\diff^1} % unit differentials
\newcommand{\uad}{{\ad^1}} % space of unit-area abelian differentials
\newcommand{\M}{{\EuScript M}} % moduli space
\newcommand{\SL}{\mathrm{SL}} % special linear group
\newcommand{\SO}{\mathrm{SO}} % the special orthogonal group
\newcommand{\Hil}{{\EuScript H}} % notation for a Hilbert space
\newcommand{\hil}{\Hil}
\newcommand{\ltwo}{{\EuScript L}} % notationf or L2 space L^2(\uqd,\mu)
\newcommand{\unitary}{{\sf U}} %Unitary operators of a Hilbert spce
\newcommand{\un}{\unitary}
\newcommand{\shrink}{\mathcal{B}} %notation for a family of shrinking targets
\newcommand{\targs}{\shrink} % alaternate macro
\newcommand{\tar}{B} %an individual target in the family
\newcommand{\hit}{{\EuScript E}} %The hitting set for a shrinking family
\newcommand{\eahit}{\hit^{\mathrm{a}}} % he evenuall always hitting set
\begin{document}

\renewcommand{\thefootnote}{\fnsymbol{footnote}} 
\footnotetext{\emph{Key words and phrases:} 
shrinking targets, 
moduli space of quadratic differentials,
logarithm laws, Teichm\"uller geodesic flow, 
Borel--Cantelli lemma
} 
\footnotetext{\emph{2010 Mathematics Subject Classification:} Primary 
37A, % Dynamical systems and ergodic theory
37A10; % One-parameter continous families of m.p transformations
37A30; %Ergodic theorems, spectral theory, markov operators
37E35; %Flows on surfaces
Secondary
57S25, % Groups acting on specific manifolds
}
\renewcommand{\thefootnote}{\arabic{footnote}} 

\title{Discretely shrinking targets in moduli space}

\author{Spencer Dowdall and Grace Work
}
\date{\today}

\maketitle

\begin{abstract}
We consider the discrete shrinking target problem for Teichm\"uller geodesic flow on the moduli space of abelian or quadratic differentials and prove that the discrete geodesic trajectory of almost every differential will hit a shrinking family of targets infinitely often provided the measures of the targets are not summable. This result applies to any ergodic $\SL(2,\R)$--invariant measure and any nested family of spherical targets. Under stronger conditions on the targets, we moreover prove that almost every differential will eventually always hit the targets.
As an application, we obtain a logarithm law describing the rate at which generic discrete trajectories accumulate on a given point in moduli space. These results build on work of Kelmer \cite{Kelmer} and generalize theorems of Aimino, Nicol, and Todd \cite{aimino-nicol-todd}.
\end{abstract}

\section{Introduction} 

In a finite measure space $(X,\mu)$ with a measure preserving transformation $T\colon X\to X$, the \emph{discrete shrinking target problem} considers the question of which trajectories $\{T^n(x)\}_{n\in \N}$ will hit a shrinking family of nested measurable subsets $\tar_1\supset B_2 \supset\dotsb$ infinitely often. More precisely, defining the \emph{hitting set} for the family $\targs = \{\tar_n\}_{n\in \N}$ to be
\[\hit_T(\targs) = \Big\{x\in X \mid T^n(x)\in \tar_n\text{ for infinitely many }n\in\N\Big\},\]
the goal of the shrinking target problem is to study $\hit_T(\targs)$ and characterize, for example, when it has full or null measure. In the case of an ergodic transformation, it is easy to see that $\hit_T(\targs)$ will have full measure whenever $\mu(\tar_n)$ does not converge to $0$. On the other hand, the Borel--Cantelli Lemma implies that $\mu(\hit_T(\targs)) = 0$ whenever $\sum_{n}\mu(B_n) < \infty$. In the opposite case $\sum_{n} \mu(B_n) = \infty$, the converse to the Borel--Cantelli Lemma implies that $\hit_T(\targs)$ has full measure provided the events $\{x \mid T^n(x)\in \tar_n\}$ are independent.
There are nevertheless many naturally occurring settings in which this independence condition fails but the the hitting set $\hit_T(\targs)$ does has full measure.

\paragraph{Moduli space of differentials.} This paper considers the discrete shrinking target problem for the Teichm\"uller geodesic flow on the moduli spaces of unit area abelian or quadratic differentials. Fix a finite type surface $\surf$ (not the thrice-punctured sphere) with $\chi(\surf) < 0$ and let $\M = \M(\surf)$ denote the moduli space of Riemann surface structures on $\surf$ up to biholomorphism. An abelian differential on a Riemann surface $X\in \M$ is a holomorphic section $\omega$ of the holomorphic cotangent bundle, and a quadratic differential is a holomorphic section $q$ of its symmetric square.
We write $\diff(X)$ for the disjoint union of the vector spaces $\ad(X)$ and $\qd(X)$ of abelian and quadratic differentials on $X$, and write $\udiff(X)=\uad(X)\sqcup \uqd(X)$ for those differentials that have unit area with respect to the piecewise Euclidean structure they induce on $X$; see \S\ref{sec:quadratic_diffs} for details. 
The space $\diff = \diff(\surf)$ of all abelian or quadratic differentials forms a bundle $\diff\to \M$ over moduli space. The subbundle $\qd\to\M$ of quadratic differentials is canonically identified with the cotangent bundle $T^*(\M)$, and the restriction $\uqd\to \M$ to unit area quadratic differentials is the unit cotangent bundle for the Finsler Teichm\"uller metric on $\M$.
The group $G = \SL(2,\R)$ acts on $\diff$ and $\udiff$ via affine adjustment of the piecewise Euclidean structure; see \S\ref{sec:quadratic_diffs}.
The action of the diagonal subgroup $g_t = \left(\begin{smallmatrix}e^t & 0 \\ 0 & e^{-t}\end{smallmatrix}\right)$ then gives the \emph{Teichm\"uller geodesic flow} on $\uqd = T^*(\M)$: unit-speed Teichm\"uller geodesics in $\M$ are precisely given by projections of the paths $t \mapsto g_tq$ for $q\in \uqd$.

\paragraph{Shrinking targets.} Our main result characterizes  which shrinking targets in moduli space, or more generally abelian or quadratic differential space, are hit by almost every discrete geodesic trajectory. For the statement, a \define{spherical} subset of $\diff$ is one that is invariant under the action of the orthogonal subgroup $K = \SO(2)\le G$. 
Since $K$ preserves each fiber of $\pi\colon \udiff\to \M$, preimage sets $\pi\inv(B)\subset \udiff$ are automatically spherical.

\begin{theorem}
\label{thm:main}
Let $\mu$ be any ergodic, $G = \SL(2,\R)$--invariant, probability measure on $\udiff$. Let  $\tar_1 \supset \tar_2 \supset \dotsb$ be a nested family $\targs$ of $\mu$--measurable spherical subsets $\tar_n\subset \udiff$, and consider the hitting set $\hit_g(\targs) = \Big\{\xi\in \udiff \mid \{n\in \N \mid g_n\xi\in \tar_n\}\text{ is infinite}\Big\}$.
\begin{enumerate}
\item\label{maincase:summable} If $\sum_{n=1}^\infty \mu(\tar_n) < \infty$, then $\mu(\hit_g(\targs)) = 0$.
\item\label{maincase:inf_sum} If $\sum_{n=1}^\infty \mu(\tar_n) = \infty$, then $\hit_g(\targs)$ has full $\mu$--measure.
\item\label{maincase:unbounded} If, moreover, $\{n\mu(\tar_n)\}_{n\in \N}$ is unbounded, then there is a subsequence $n_j$ such that
\[\lim_{j \to \infty}\frac{\#\{1 \le i \le n_j \mid g_i \xi\in \tar_{n_j}\}}{n_j \mu(\tar_{n_j})} = 1\qquad\text{for $\mu$--almost every }\xi\in \udiff.\]
\end{enumerate}
\end{theorem}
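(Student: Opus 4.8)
Write $A_n = g_n\inv\tar_n$, so that $\hit_g(\targs) = \limsup_n A_n$ and, since $g_n$ preserves $\mu$, we have $\mu(A_n) = \mu(\tar_n)$. With this notation part~(1) is exactly the convergence half of the Borel--Cantelli lemma applied to $\{A_n\}$. The substance is in parts~(2) and~(3), and the plan is to derive both from a single effective \emph{decorrelation estimate} for spherical sets under the geodesic flow, fed into quantitative Borel--Cantelli arguments of the kind developed by Kelmer \cite{Kelmer}.

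The estimate I would prove is that there exist constants $C,\delta>0$, depending only on $\mu$, with
\[
\bigl|\mu(\tar\cap g_t\tar') - \mu(\tar)\mu(\tar')\bigr| \;\le\; C(1+t)\,e^{-\delta t}\,\sqrt{\mu(\tar)\,\mu(\tar')}
\]
for all $\mu$--measurable spherical $\tar,\tar'\subset\udiff$ and all $t\ge 1$. First I would use $G$--invariance of $\mu$ to rewrite the left side as the absolute value of a matrix coefficient, of the Koopman representation of $G$ on $L^2(\udiff,\mu)$, of the mean-zero parts $f = \mathbf{1}_\tar - \mu(\tar)$ and $f' = \mathbf{1}_{\tar'} - \mu(\tar')$ of the indicator functions, which lie in the orthocomplement $L^2_0$ of the constants. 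Since $\tar$ and $\tar'$ are spherical, $f$ and $f'$ are $K$--invariant, so in the decomposition of $L^2_0$ into $G$--irreducibles they are carried by the (at most one-dimensional) $K$--fixed lines of the spherical principal and complementary series representations, and the matrix coefficient becomes an average, against the spectral measures of $f$ and $f'$, of Harish--Chandra spherical functions $\phi_\lambda(g_t)$. On the principal series $|\phi_\lambda(g_t)|$ is dominated by the Harish--Chandra function $\Xi(g_t)\asymp(1+t)e^{-t}$, while a complementary series component with parameter $s\in(0,1)$ contributes $O(e^{-(1-s)t})$. The crucial point --- and the reason one need not smooth the indicators --- is that once the complementary series parameters occurring in $L^2_0$ are bounded away from the trivial representation, the bound $|\phi_\lambda(g_t)|\le C(1+t)e^{-\delta t}$ holds \emph{uniformly over the spectrum}, and then Cauchy--Schwarz inside the spectral decomposition gives the displayed estimate for arbitrary (non-smooth) $L^2$ spherical $f,f'$. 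That spectral gap is available for every ergodic $\SL(2,\R)$--invariant probability measure on $\udiff$: by the Eskin--Mirzakhani(--Mohammadi) classification such a measure is the affine measure on an affine invariant submanifold, and the $\SL(2,\R)$--action on such a submanifold has a spectral gap by the exponential mixing results of Avila--Gou\"ezel(--Yoccoz).

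Granting the estimate, parts~(2) and~(3) follow by second-moment arguments. For part~(2), put $\Phi(N) = \sum_{n\le N}\mu(A_n) = \sum_{n\le N}\mu(\tar_n)\to\infty$; separating the diagonal and using $\sqrt{\mu(\tar_m)\mu(\tar_n)}\le\tfrac12(\mu(\tar_m)+\mu(\tar_n))$ together with $\sum_{j\ge1}(1+j)e^{-\delta j}<\infty$, the decorrelation estimate gives $\sum_{m,n\le N}\mu(A_m\cap A_n)\le\Phi(N)^2+C'\Phi(N)$, so the Kochen--Stone lemma yields $\mu(\hit_g(\targs)) = \mu(\limsup_n A_n) = 1$ directly (a softer route, needing only positivity of this measure, uses that nestedness of the $\tar_n$ makes $\hit_g(\targs)$ invariant under the time-one map $g_1$, which is ergodic by the Mautner phenomenon). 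For part~(3), choose $n_j\to\infty$ with $n_j\mu(\tar_{n_j})\to\infty$ --- possible since $\{n\mu(\tar_n)\}$ is unbounded --- and set $T_N = \sum_{i=1}^N\mathbf{1}_{g_i\inv\tar_N}$, so $T_N(\xi) = \#\{1\le i\le N : g_i\xi\in\tar_N\}$ and $\int T_N\,d\mu = N\mu(\tar_N)$; applying the decorrelation estimate to the single set $\tar_N$ and again separating the diagonal bounds $\mathrm{Var}_\mu(T_N)\le C''N\mu(\tar_N)$, so $\mathrm{Var}_\mu(T_N)/(\int T_N\,d\mu)^2\le C''/(N\mu(\tar_N))\to 0$ along the subsequence. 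Chebyshev's inequality then gives convergence in measure of $T_{n_j}/(n_j\mu(\tar_{n_j}))$ to $1$, and a further Borel--Cantelli step (passing to a subsequence along which $\sum_j(n_j\mu(\tar_{n_j}))^{-1}<\infty$) upgrades this to $\mu$--almost everywhere convergence, which is the assertion.

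The step I expect to be the main obstacle is the decorrelation estimate itself, and within it the task of obtaining a \emph{uniform} exponential rate for the manifestly non-smooth indicator functions of the targets. This is precisely where the spherical hypothesis is indispensable: it reduces the relevant matrix coefficients to the classical spherical functions of $\SL(2,\R)$, whose decay is insensitive to any irregularity of $\tar$ and $\tar'$; and it is also where one must feed in the spectral gap for arbitrary ergodic invariant measures on moduli space. A secondary difficulty is to arrange the Borel--Cantelli input precisely enough to produce the sharp counting asymptotic of part~(3) rather than only the infinitude of hits in part~(2).
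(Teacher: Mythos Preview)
Your proposal is correct and follows essentially the same route as the paper. The decorrelation estimate you describe is exactly equation~(3.1) there (derived from Ratner's matrix-coefficient bounds together with the Avila--Gou\"ezel spectral gap, valid for arbitrary ergodic invariant $\mu$ via Eskin--Mirzakhani), and the paper applies it just as you do: for part~(2) it records your second-moment bound as a ``quasi-independence'' proposition and runs the Sprind\v{z}uk/Schmidt variance argument (your Kochen--Stone formulation is equivalent, since the bound $\Phi(N)^2+C'\Phi(N)$ already forces the limsup measure to be~$1$); for part~(3) it packages your variance computation as an effective mean ergodic theorem $\|\beta_n f - \langle f,1\rangle\|_2\ll n^{-1/2}\|f\|_2$ and then, as you do, passes from $L^2$ convergence along a subsequence with $n_j\mu(\tar_{n_j})\to\infty$ to a further a.e.\ convergent subsequence.
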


\begin{remark}
There are numerous measures---such as the extensively studied Masur--Veech measures (see \S\ref{sec:invariant_measures}) or measures coming from arbitrary orbit closures---to which \Cref{thm:main} applies.
  Any ergodic measure $\mu$ is in fact supported on one of the subsets $\uad$ or $\uqd$, which are both preserved by $G$.
    We have opted to work with their disjoint union $\udiff$ simply to economize on notation.
\end{remark}

\begin{remark}
A random sequence $\xi_1,\dotsc,\xi_n$ would land in a measurable subset $B$ approximately $n\mu(B)$ times; 
thus the conclusion of \Cref{thm:main}.\ref{maincase:unbounded} says there is a subsequence of targets that are hit asymptotically the correct number of times.
\end{remark}

In \cite[Theorem 6.7]{aimino-nicol-todd}, Aimino, Nicol, and Todd consider the \emph{continuous} shrinking target problem for the Teichm\"uller geodesic and prove the continuous-time analog of \Cref{thm:main}.\ref{maincase:summable}--\ref{maincase:inf_sum} for the Masur--Veech measure on the principle stratum (see \S\ref{sec:quadratic_diffs}) in the case of targets that are preimages $\pi\inv(B_t)$ of shrinking metric balls $B_t\subset \M$.
As requiring a trajectory $\{g_t\xi\}_{t\in \R_+}$ to hit the targets at infinitely many \emph{integer} times is more subtle and restrictive than requiring $\{t\in \R_+ \mid \pi(g_t\xi)\in B_t\}$ to be unbounded, \Cref{thm:main} thus strengthens the results in \cite{aimino-nicol-todd} while also extending them to arbitrary measures  and more general families of targets.

We also consider the finer question of which differentials eventually always hit the targets. Given a shrinking family $\targs = \{\tar_n\}_{n\in \N}$ in $\udiff$, say that  $\xi\in \udiff$ \emph{hits} the target $\tar_n$ if $g_i \xi\in \tar_n$ for some $1 \le i \le n$. We then let $\eahit_g(\targs)$ denote the set of differentials that hit $\tar_n$ for all sufficiently large $n$.
If $\xi\in \eahit_g(\targs)$, then for all large $n$ we may choose $1 \le j_n\le n$ so that $g_{j_n}\xi\in \tar_n \subset \tar_{j_n}$. 
Clearly this sequence $\{j_n\}$ is unbounded unless $\xi$ lies in $g_{-k}(\cap_n \tar_n)$ for some $k\in \N$. As $\cup_k g_k(\cap_n\tar_n)$ has measure zero when $\mu(\tar_n) \to 0$ and otherwise $\hit_g(\targs)$ has full measure (\Cref{thm:main}.\ref{maincase:inf_sum}), we conclude that $\eahit_g(\targs)\subset \hit_g(\targs)$ up to a set of measure zero.
The next theorem gives conditions ensuring this smaller set has full measure and, moreover, that generic differentials hit \emph{all} small targets approximately the expected number of times. 

\begin{theorem}
\label{thm:always_hit}
Let $\mu$ and $\targs = \{\tar_n\}_{n\in\N}$ be as in \Cref{thm:main}, and set
\[\eahit_{g}(\targs) = \Big\{\xi\in \udiff \mid \{g_1 \xi, \dotsc, g_n\xi\}\cap B_n \ne \emptyset \text{ for all sufficiently large $n$}\Big\}.\]
\begin{enumerate}
\item
\label{alwayshit-fullmeas}
If there is an increasing sequence $n_1,n_2,\dotsc$ in $\N$ with $\sum_{j}(n_j\mu(\tar_{n_{j+1}}))\inv$ finite, then $\eahit_g(\targs)$ has full $\mu$--measure.
\item
\label{alwayshit-aysmptotics}
If there exists $\lambda > 1$ and an increasing sequence $n_1,n_2,\dotsc$ in $\N$ with $\sum_j (n_j \mu(\tar_{n_{j+1}}))\inv$ finite and $n_{j+1}\mu(\tar_{n_j})\le \lambda n_j \mu(\tar_{n_{j+1}})$ for all $j$, then  $\mu$--almost every $\xi\in \udiff$ satisfies
\[ \frac{1}{2\lambda} \le  \frac{\#\{1\le i \le n \mid g_i \xi\in B_n\}}{n\mu(B_n)} \le 2\lambda\qquad\text{for all sufficiently large $n$.}\]
\end{enumerate}
\end{theorem}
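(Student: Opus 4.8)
The plan is to derive \Cref{thm:always_hit} from a quantitative version of the second Borel--Cantelli estimate underlying \Cref{thm:main}, applied along the sparse subsequence $\{n_j\}$. The key observation is that if $\xi$ hits $\tar_{n_{j+1}}$ during the time window $n_j < i \le n_{j+1}$ — or even just during $1\le i\le n_{j+1}$ — then, because the targets are nested, $\xi$ hits $\tar_n$ for every $n$ with $n_j < n \le n_{j+1}$: indeed for such $n$ we have $\tar_{n_{j+1}}\subset \tar_n$ and $n_{j+1}\ge n$, so the witnessing time $i\le n_{j+1}$ may exceed $n$, which is the subtlety. To fix this, I would instead work with the windowed count $N_j(\xi) := \#\{n_j < i \le n_{j+1} \mid g_i\xi \in \tar_{n_{j+1}}\}$ and show $N_j(\xi) \ge 1$ for all large $j$; then for $n_j < n \le n_{j+1}$ the smallest such witnessing time $i$ satisfies $i \le n_{j+1}$, and I need $i \le n$. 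This forces the more careful choice of using the target $\tar_{n_{j+1}}$ but the window ending at $n$ rather than $n_{j+1}$; so really one should track, for each $n$, the count $\#\{1\le i\le n \mid g_i\xi\in \tar_n\}$ directly and show it is positive once $n$ is large, using that it dominates $\#\{n_{j}<i\le n \mid g_i\xi\in\tar_{n}\}\ge \#\{n_j<i\le n\mid g_i\xi\in\tar_{n_{j+1}}\}$ for $n$ in the $j$-th window.

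The analytic heart is a second-moment (Chebyshev/Paley--Zygmund) estimate for the windowed counting functions $S_j(n,\xi) := \#\{n_j < i \le n \mid g_i\xi\in \tar_{n_{j+1}}\}$, culminating in the quantitative statement that for $\mu$-almost every $\xi$ and all sufficiently large $j$, and for all $n$ in the window $n_j< n\le n_{j+1}$, the count $S_j(n,\xi)$ is comparable to its expectation $(n-n_j)\mu(\tar_{n_{j+1}})$. This is exactly the kind of estimate that must already appear (implicitly or explicitly) in the proof of \Cref{thm:main}, built on effective mixing / decay of matrix coefficients for the $G$-action on $L^2_0(\udiff,\mu)$ — that is, on Kelmer's input \cite{Kelmer} giving a polynomial (or at least summable-in-$n$) rate for $\int (\mathbf 1_{\tar}\circ g_i)(\mathbf 1_{\tar}\circ g_k)\,d\mu - \mu(\tar)^2$ once $|i-k|$ is large, together with the spherical/$K$-invariance hypothesis that lets one average over $K$ and apply the wavefront / quantitative-mixing bound uniformly. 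Summing the variance bound over the window and using the hypothesis $\sum_j (n_j\mu(\tar_{n_{j+1}}))^{-1} < \infty$ makes the Borel--Cantelli series converge: the bad event ``$S_j$ deviates from its mean by more than half'' has probability $\lesssim (n_j\mu(\tar_{n_{j+1}}))^{-1}$ (after the variance is controlled by the mean plus a lower-order mixing error), so by Borel--Cantelli only finitely many bad events occur. For part \ref{alwayshit-fullmeas}, positivity of $S_j(n,\xi)$ for $n$ in the window and large $j$ is immediate once the lower bound $S_j(n,\xi)\ge \tfrac12(n-n_j)\mu(\tar_{n_{j+1}})\ge 1$ holds, which requires $n - n_j$ not too small; the genuinely small values $n_j < n \le n_j + o(n_j)$ at the start of each window are handled by instead using that $\xi$ already hit $\tar_{n_j}\supset\tar_n$ at some time $\le n_j \le n$, i.e.\ by overlapping the windows or by noting $\tar_n\supset\tar_{n_{j+1}}$ and that the hit in window $j-1$ at time $\le n_{j}$ still lies in $\{1,\dots,n\}$.

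For part \ref{alwayshit-aysmptotics}, I would upgrade the one-sided bound to a two-sided one by running the same second-moment argument but now also controlling the full count $\#\{1\le i\le n\mid g_i\xi\in\tar_n\}$ from above. The upper bound uses that this count is at most $\sum_{k\le j}\#\{n_{k-1}<i\le n_k\mid g_i\xi\in\tar_{n_k}\}$ plus a tail, and each windowed count concentrates near $(n_k-n_{k-1})\mu(\tar_{n_k})$; the hypothesis $n_{j+1}\mu(\tar_{n_j})\le \lambda n_j\mu(\tar_{n_{j+1}})$ is precisely what keeps the ratio $n\mu(\tar_n)$ from oscillating by more than a factor $\lambda$ across a window, so that $(n_j-n_{j-1})\mu(\tar_{n_j})$, $n\mu(\tar_n)$, and the expectations all agree up to the stated factor of $2$. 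Chaining these comparisons for $n$ in the $j$-th window gives the claimed $\tfrac1{2\lambda}\le \#\{\cdots\}/(n\mu(B_n))\le 2\lambda$ for all large $n$, with the ``$2$'' absorbing the $\tfrac12$-deviation slack from the almost-sure concentration. The main obstacle I anticipate is the bookkeeping at window boundaries — ensuring the witnessing time $i$ lies in $\{1,\dots,n\}$ and not merely $\{1,\dots,n_{j+1}\}$, and controlling the partial window $n_j<n\le n_{j+1}$ uniformly in $n$ rather than just at the endpoints — since the naive application of Borel--Cantelli only controls the endpoints $n_j$; the resolution is to prove the variance estimate for $S_j(n,\xi)$ uniformly over $n$ in the window (a maximal-inequality-type statement, e.g.\ via a dyadic decomposition of the window or a direct maximal variant of the Chebyshev bound), which is routine once the pairwise-correlation decay is in hand but must be stated carefully.
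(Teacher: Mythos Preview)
Your overall strategy—a Chebyshev/second-moment bound applied along the sparse subsequence $\{n_j\}$, then Borel--Cantelli—is exactly what the paper does, and the hypothesis $\sum_j (n_j\mu(\tar_{n_{j+1}}))^{-1}<\infty$ is used precisely as you guess. But your implementation is needlessly complicated, and the ``main obstacle'' you anticipate (controlling partial windows uniformly, possibly via a maximal inequality) is entirely avoided by a cleverer choice of the sets to which Borel--Cantelli is applied.

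The paper does not work with windowed counts $S_j(n,\xi)=\#\{n_j<i\le n\mid g_i\xi\in\tar_{n_{j+1}}\}$. Instead it uses the \emph{full} initial-segment average $\beta_m(f_n)(\xi)=\tfrac1m\#\{1\le i\le m\mid g_i\xi\in\tar_n\}$ with the specific choice $m=n_j$, $n=n_{j+1}$. Writing $W_{m,n}=\{\xi\mid \beta_m(f_n)(\xi)=0\}$, the effective mean ergodic theorem (packaging the correlation decay you invoke) gives directly $\mu(W_{m,n})\ll (m\,\mu(\tar_n))^{-1}$ via Chebyshev. The crucial monotonicity is then
\[
W_{n,n}\subset W_{n_j,\,n_{j+1}}\qquad\text{for every }n_j\le n\le n_{j+1},
\]
because if $\{g_1\xi,\dots,g_n\xi\}$ misses $\tar_n$, then the shorter orbit $\{g_1\xi,\dots,g_{n_j}\xi\}$ misses the smaller target $\tar_{n_{j+1}}\subset\tar_n$. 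Thus a single set $W_{n_j,n_{j+1}}$ controls the entire window, and Borel--Cantelli on $\{W_{n_j,n_{j+1}}\}_j$ finishes part~\ref{alwayshit-fullmeas} immediately. No maximal inequality, no boundary bookkeeping, no worry about witnessing times exceeding $n$: the witnessing time is at most $n_j\le n$ by construction.

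Part~\ref{alwayshit-aysmptotics} is handled by the analogous two-sided monotonicity
\[
\tfrac{n_j}{n_{j+1}}\,\beta_{n_j}(f_{n_{j+1}})\;\le\;\beta_n(f_n)\;\le\;\tfrac{n_{j+1}}{n_j}\,\beta_{n_{j+1}}(f_{n_j})\qquad(n_j\le n\le n_{j+1}),
\]
which shows the bad set $Y^{2\lambda}_{n,n}=\{\xi\mid \beta_n(f_n)(\xi)\notin[(2\lambda)^{-1}\mu(\tar_n),\,2\lambda\mu(\tar_n)]\}$ is contained in $Y^2_{n_j,n_{j+1}}\cup Y^2_{n_{j+1},n_j}$; the $\lambda$-hypothesis converts the ratio $n_{j+1}/n_j$ and the target mismatch into the factor $\lambda$, leaving the factor $2$ as slack. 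Again a single Chebyshev bound at the two endpoints suffices, with no need to sum windowed counts or track them uniformly in $n$. Your approach could likely be pushed through, but the paper's monotonicity trick is both simpler and sharper.
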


\begin{remark}
While the hypotheses of \Cref{thm:always_hit}.\ref{alwayshit-fullmeas}--\ref{alwayshit-aysmptotics} may seem restrictive, they in fact hold in many natural situations. For example, they are automatically satisfied (with $n_j = 2^j$) provided the the targets $\tar_n$ decay at the rate $\mu(\tar_n)\asymp n^{-\kappa}$ for some $0 < \kappa < 1$. We will use these always-hitting results to obtain the strong logarithm laws in \Cref{thm:loglaw} below.
\end{remark}

\Cref{thm:main,thm:always_hit} extend Kelmer's work on shrinking targets for discrete time flows on hyperbolic manifolds to the context of Teichm\"uller geodesic flow
and may be viewed as analogues of Theorems 1 and 2, respectively, in \cite{Kelmer}.

\paragraph{Logarithm laws.} Since the geodesic flow is ergodic, the orbit closure $\overline{\{g_n\xi \mid n\in \N\}}$ of almost every differential $\xi\in \udiff$ will equal the support of the measure. That is, the sequence $\{g_n\xi\}$ ultimately accumulates on almost every point $\xi_0$. It is natural to quantify this phenomenon by asking how long it takes for $g_n\xi$ to land within some fixed distance of $\xi_0$ or how close the orbit gets to $\xi_0$ in its first $n$ steps. Our shrinking target results imply \emph{logarithm laws} describing these quantities.

More precisely, consider the path metric $d_\M$ that Moduli space inherits from the Teichm\"uller metric on $\T$; see \S\ref{sec:teich_space}. For any Riemann surface $X\in \M$ and  differential $\xi\in \udiff$, we then let 
\[d_n(\xi,X) = \min_{0\le j \le n} d_\M(\pi(g_j \xi),X)\]
denote the closest that the projected orbit comes to $X$ in its first $n\ge 0$ steps and dually let
\[\tau_r(\xi, X)  = \inf\{n\in \N \mid d_\M(\pi(g_n \xi),X) \le r\}\]
denote the first time the orbit is within $r > 0$ of $X$. Calculating the decay/growth rate of $d_n$ and $\tau_r$ requires fine estimates for the measure of small metric balls in $\M$. Such estimates are most readily obtained (\Cref{lem:small_balls}) in the case of the Masur--Veech measure $\lambda^1_{\beta}$ on the principle stratum $\uqd(\alpha)$ where $\alpha = (-1,\dotsc,-1,1,\dotsc,1;-1)$; see \S\ref{sec:quadratic_diffs} and \S\ref{sec:ball_measures} for details.

\begin{theorem}
\label{thm:loglaw}
Fix a Riemann surface $X\in \M$ and let $\lambda_\alpha^1$ denote the Masur--Veech measure on the principle stratum $\uqd(\alpha)$ of unit-area quadratic differentials. Then for $\lambda_\alpha^1$--almost every $q\in \uqd$ one has
\[\lim_{n\to \infty} \frac{\log(d_n(q,X))}{\log(1/n)} = \frac{1}{\dim_\R(\M)}
\qquad\text{and}\qquad
\lim_{r\to 0} \frac{\log(\tau_r(q,X))}{\log(1/r)} = \dim_\R(\M).\]
\end{theorem}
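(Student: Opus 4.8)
The plan is to deduce the logarithm law from the shrinking-target results (\Cref{thm:main} and \Cref{thm:always_hit}) applied to the preimages $\tar_n = \pi\inv(B(X,r_n))$ of metric balls around $X$, for suitably chosen radii $r_n$, combined with a volume estimate for small balls in $\M$. The key external input is the ball-measure estimate (referenced in the excerpt as \Cref{lem:small_balls}): for the Masur–Veech measure $\lambda^1_\alpha$ on the principal stratum there are constants so that $\lambda^1_\alpha\bigl(\pi\inv(B(X,r))\bigr)\asymp r^{\dim_\R(\M)}$ for all small $r > 0$ (or at least with polynomial upper and lower bounds giving the same exponent in the limit). Since these preimage sets are automatically spherical, \Cref{thm:main,thm:always_hit} apply verbatim.

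First I would prove the statement about $\tau_r$. Fix $\epsilon > 0$ and set $d = \dim_\R(\M)$. For the \emph{lower bound} $\liminf_{r\to 0}\log\tau_r/\log(1/r)\ge d-\epsilon$: take radii $r_n$ shrinking so that $r_n^{d+\epsilon'}\asymp 1/n$ for a small $\epsilon' > 0$, i.e.\ $r_n \asymp n^{-1/(d+\epsilon')}$; then $\mu(\tar_n)\asymp r_n^d \asymp n^{-d/(d+\epsilon')}$, which is summable when $d/(d+\epsilon') > 1$—wait, that is never true, so instead I choose $r_n$ with $\mu(\tar_n)\asymp n^{-1-\delta}$, giving $r_n\asymp n^{-(1+\delta)/d}$; by \Cref{thm:main}.\ref{maincase:summable} the orbit hits $\tar_n$ at only finitely many times $n$, hence for all large $n$ we have $d_\M(\pi(g_n q),X) > r_n$, which translates (after inverting $n\mapsto r_n$) into $\tau_r(q,X)\gtrsim r^{-d/(1+\delta)}$ for all small $r$, yielding $\liminf \log\tau_r/\log(1/r)\ge d/(1+\delta)$; letting $\delta\to 0$ through a countable sequence gives $\ge d$ almost everywhere. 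For the \emph{upper bound}, I choose $r_n$ with $\mu(\tar_n)\asymp n^{-\kappa}$ for a fixed $0 < \kappa < 1$, so $r_n\asymp n^{-\kappa/d}$; since $\sum_n \mu(\tar_n) = \infty$, \Cref{thm:main}.\ref{maincase:inf_sum} gives that almost every $q$ hits $\tar_n$ infinitely often, so for infinitely many $n$ there is $j\le n$ with $\pi(g_j q)\in B(X,r_n)$, which gives $\tau_{r_n}(q,X)\le n$, i.e.\ $\tau_r(q,X)\lesssim r^{-d/\kappa}$ along a sequence $r\to 0$; combined with monotonicity of $\tau_r$ in $r$ this yields $\limsup_{r\to 0}\log\tau_r/\log(1/r)\le d/\kappa$, and letting $\kappa\to 1$ through a countable sequence gives $\le d$. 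Together these give the exact limit $d$ for $\tau_r$.

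Next I would derive the $d_n$ statement, which is essentially dual. The inequality $d_\M(\pi(g_n q),X)\le r\iff \tau_r(q,X)\le n$ gives the tautology $d_n(q,X) = \inf\{r > 0 : \tau_r(q,X)\le n\}$, so the two logarithm laws are formally equivalent via a change of variables; concretely, $\tau_r\asymp r^{-d}$ for all small $r$ inverts to $d_n\asymp n^{-1/d}$ for all large $n$, giving $\lim_n \log d_n/\log(1/n) = 1/d$. To make this rigorous one has to be slightly careful because the almost-sure statements for $\tau_r$ as proved above are $\liminf$/$\limsup$ along sequences, not genuine two-sided limits, so I would instead argue directly: the upper bound $d_n(q,X)\lesssim n^{-(1-\epsilon)/d}$ for all large $n$ follows from \Cref{thm:always_hit}.\ref{alwayshit-fullmeas} applied with $\tar_n = \pi\inv(B(X, c\,n^{-\kappa'/d}))$ for appropriate $0 < \kappa' < 1$ and $n_j = 2^j$ (the hypothesis $\sum_j (n_j\mu(\tar_{n_{j+1}}))\inv < \infty$ reduces to $\sum_j 2^{-j}2^{j\kappa'} < \infty$, i.e.\ $\kappa' < 1$), since $\eahit_g(\targs)$ having full measure means almost every $q$ hits every sufficiently small such ball within its first $n$ steps; and the lower bound $d_n(q,X)\gtrsim n^{-(1+\epsilon)/d}$ follows from \Cref{thm:main}.\ref{maincase:summable} with $\mu(\tar_n)\asymp n^{-1-\delta}$ as above, since then $g_n q\notin \tar_n$ for all large $n$—but one must upgrade this to ``$g_j q\notin \pi\inv(B(X,s))$ for all $j\le n$ and all large $n$'' by replacing the single shrinking family with a sequence of them (take $r_n$ constant in blocks), a standard maneuver. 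Intersecting over a countable dense set of $\epsilon$ then gives the precise limit $1/d$.

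The main obstacle I anticipate is not the shrinking-target machinery—which is handed to us by \Cref{thm:main,thm:always_hit}—but rather the ball-measure estimate $\lambda^1_\alpha(\pi\inv(B(X,r)))\asymp r^{\dim_\R\M}$ underlying everything; this is precisely why the theorem is restricted to the Masur–Veech measure on the principal stratum, where $\pi\inv(B(X,r))$ can be compared to a Euclidean ball in period coordinates of the expected real dimension. The subtlety is that the Teichmüller metric is only a Finsler (not Riemannian) metric and the comparison between metric balls and period-coordinate balls requires uniformity in $X$ within compact sets and control near the boundary of moduli space; I would isolate this as the separate lemma \Cref{lem:small_balls} (as the excerpt indicates the authors do) and treat the logarithm law itself as the comparatively routine deduction sketched above. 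A secondary technical point is the passage from ``hitting infinitely often'' / ``eventually always hitting'' for a \emph{fixed} nested family to statements that hold \emph{simultaneously} for the continuum of radii $r$; this is handled by the standard trick of choosing a countable sequence $\epsilon_k\to 0$, proving the full-measure statement for each, and intersecting, using monotonicity of $d_n$ in $n$ and of $\tau_r$ in $r$ to fill in the gaps.
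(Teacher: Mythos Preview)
Your proposal is correct and, after your self-corrections, lands on essentially the paper's argument: prove the $d_n$ limit first by applying \Cref{thm:main}.\ref{maincase:summable} with radii $r_m^+=m^{-(1+\epsilon)/d}$ for one inequality and \Cref{thm:always_hit}.\ref{alwayshit-fullmeas} with $r_m^-=m^{-(1-\epsilon)/d}$ and $n_j=2^j$ for the other, then deduce the $\tau_r$ limit from the duality $\tau_r\le y\iff d'_y<r$. You also correctly diagnose why your initial attack on $\tau_r$ via \Cref{thm:main}.\ref{maincase:inf_sum} alone only yields a subsequential bound.

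One simplification worth noting: your proposed ``constant-in-blocks'' maneuver for the lower bound on $d_n$ is unnecessary. The paper instead exploits the nesting of the targets directly. Suppose $d_m(\xi,X)<r_m^+$ for infinitely many $m$, and let $k_m\le m$ be the index realizing the minimum. Then
\[
d_\M(\pi(g_{k_m}\xi),X)=d_m(\xi,X)<r_m^+\le r_{k_m}^+
\]
since $r_n^+$ is decreasing, so $g_{k_m}\xi\in\tar_{k_m}^+$. Because $r_m^+\to 0$, the indices $k_m$ must be unbounded (off the null set where some $\pi(g_k\xi)=X$ exactly), contradicting $\xi\notin\hit_g(\shrink^+)$. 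This single nesting observation upgrades ``$g_n\xi\notin\tar_n^+$ eventually'' to ``$d_n(\xi,X)\ge r_n^+$ eventually'' with no auxiliary families.
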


We emphasize that \Cref{thm:loglaw} provides actual limits describing the decay and growth of $d_m$ and $\tau_r$, rather than merely limits superior as is typically the strongest consequence of shrinking target results as in \Cref{thm:main}. Here we obtain precise limits by employing the stronger results from \Cref{thm:always_hit}.

\begin{remark}
Applying dynamical shrinking target results to geometric questions about logarithm laws requires using measures that are related to the geometry of moduli space, since to quantitatively use \Cref{thm:main} one must know how the measure of a metric ball decays with its radius. This is the reason we obtain \Cref{thm:loglaw} only for the Masur--Veech measure $\lambda_\alpha^1$ and not for general ergodic $\SL(2,\R)$--invariant measures; see \S\ref{sec:ball_measures}.
\end{remark}

\paragraph{Historical Context.}

This work builds on a long history of shrinking target problems and dynamical Borel--Cantelli lemmas in various settings. 
Sullivan's 1982 paper \cite{Sullivan-disjointSpheres} established a logarithm law for cusp excursions of the geodesic flow on finite-volume hyperbolic manifolds;
this was later extended to the general setting of diagonal flows on homogeneous spaces by Kleinbock and Margulis \cite{KleinbockMargulis}.
These results
take horospherical cusp neighborhoods as the shrinking targets;
turning instead to precompact targets, Maucourant \cite{Maucourant} solved the continuous shrinking target problem for metric balls in hyperbolic manifolds and obtained a logarithm law describing the rate that typical geodesic trajectories approach a given point. These results were later refined and extended to discrete geodesic flows by Kleinbock and Zhao \cite{KleinbockZhao18}.
In the setting of the Teichm\"uller geodesic flow, Masur \cite{Masur-loglaw} proved a logarithm law for cusp excursions of typical geodesics in a Teichm\"uller disk, and as mentioned above, Aimino, Nicol, and Todd \cite{aimino-nicol-todd} recently solved a continuous-time shrinking target problem for the case of nested metric balls.
Their approach is combinatorial in nature and is based on Rauzy-Veech-Zorich renormalization on the space of interval exchange maps and the application of this framework to Teichm\"uller geodesic flow on the space of translation surfaces.
For more related result, see also 
\cite{Dolgopyat-limitThms}
\cite{Gadre-excursionSums}
\cite{Galotolo-dimHit}
\cite{GorodnikShah}
\cite{GuptaNicolOtt}
\cite{HaydnNicolPerssonVaientiSandro}
\cite{philipp-metricalNumThms} 
 and \cite{Athreya-loglaws} for a nice survey of this area.

Many of these results utilize exponential decay of correlations and require some sort of regularity on the targets considered. 
In our case of the Teichm\"uller geodesic flow on the moduli space of quadratic differentials, the shrinking target results  in \cite{aimino-nicol-todd} rely on the exponential mixing for H\"older observables proven by Avila--Gou\"ezel--Yoccoz \cite{AvilaGouezelYoccoz}. Indeed, the restriction to metric balls here is due in part to the necessity of using H\"older observables.

Kelmer \cite{Kelmer} recently introduced a new 
spectral theory approach that utilizes effective mean ergodic theorems (see also \cite{GhoshKelmer}) to
conclude strong shrinking target results---which apply to very general targets---for both the discrete geodesic and discrete horocyclic flows on hyperbolic manifolds. 
These ideas were subsequently extended to homogeneous spaces by Kelmer and Yu \cite{KelmerYu}.
We follow Kelmer's approach and adapt it to the Teichm\"uller flow setting by using
Eskin and Mirzakhani's result \cite{EM-invariantMeas} that every ergodic $\SL(2,\R)$--invariant probability measure $\mu$ on $\udiff$ is algebraic
and
Avila and Gou\"ezel's result \cite{AvilaGouezel} that the Laplacian operator on $L^2(\udiff,\mu)$ has a spectral gap for any such measure $\mu$.

\paragraph{Outline.}
In \S\ref{sec:prelims} we establish notation and review the necessary background material, including 
the unitary representation theory of $\SL(2,\R)$ (\S\ref{sec:special_linear_group}), and
the moduli space of differentials (\S\ref{sec:quadratic_diffs}) 
along with its period coordinates and invariant measures (\S\ref{sec:invariant_measures}). 
Additionally, in \S\ref{sec:ball_measures} we derive estimates for the measures of balls in $\M$.
Our main results are proven in \S\ref{sec:shrinking_targets}, with \S\ref{sec:hit_the_targets} and \S\ref{sec:always-hitt-targ} respectively devoted to \Cref{thm:main} and \Cref{thm:always_hit}.
The key ingredients for these proofs are established in \S\ref{sec:mean_ergodic}, where we follow Kelmer's \cite{Kelmer} approach in deriving an effective mean ergodic theorem for spherical functions in $L^2(\udiff,\mu)$, and in \S\ref{sec:quasi-indep}, where we formulate the quasi-independence needed for the converse to the Borel--Cantelli lemma. 
Finally, the logarithm laws of \Cref{thm:loglaw} are proven in \S\ref{sec:logarithm-laws}.

\paragraph{Acknowledgments.}

The authors would like to thank Jayadev Athreya and Vaibhav Gadre for several helpful conversations and their willingness to answer numerous questions concerning the topics of this paper. 
We also thank the anonymous referee for their careful reading of the paper and helpful suggestions.
The first named author was partially supported by NSF grant DMS-1711089.

\section{Preliminaries}
\label{sec:prelims}
Throughout, $G$ will denote the Lie group $G = \SL(2,\R)$ of real $2\times 2$ matrices with determinant one. 
For $t\in \R$ and $\theta\in \R$ consider the elements
\[g_t = \begin{pmatrix}e^t & 0 \\ 0 & e^{-t}\end{pmatrix} 
\quad\text{and}\quad
r_\theta = \begin{pmatrix} \cos\theta & \sin\theta \\ -\sin\theta & \cos\theta\end{pmatrix}.\]
Varying $t,\theta$, these respectively comprise the \define{diagonal} $A = \{g_t \mid t\in \R\}$ and \define{rotation} subgroups $K = \{r_\theta \mid \theta\in \R\} = \SO(2)$.
The Lie algebra of $G$ is the vector space $\mathfrak{g}$ of $2\times 2$ real matrices with trace $0$; it has a basis given by 
\begin{equation}
\label{lie_alg_basis}
W = \begin{pmatrix} 0 & 1\\-1&0\end{pmatrix},\quad
Q = \begin{pmatrix} 1 & 0 \\ 0 & -1 \end{pmatrix},
\quad\text{and}\quad
V = \begin{pmatrix}0 & 1 \\ 1 & 0\end{pmatrix}.
\end{equation}

\subsection{Unitary representation theory of $\SL(2,\R)$}
\label{sec:special_linear_group}
We briefly review the necessary representation theory of the Lie group $G = \SL(2,\R)$. For details, we direct the reader to \cite{Knapp-ss-rep-theory}. A \define{unitary representation} of $G$ is a homomorphism $\pi\colon G\to \un(\Hil)$ of $G$ into the unitary group $\un(\hil)$ of a complex Hilbert space $\hil$ such that the induced action map $G\times \hil\to \hil$ is continuous. 
Two such representations $\pi_i\colon G\to \un(\hil_i)$ are \define{unitarily equivalent} if there is a unitary isomorphism $\varphi\colon \hil_1\to\hil_2$ so that $\varphi\pi_1(g) = \pi_2(g)\varphi$ for all $g\in G$.
We refer to a representation simply by the name of its Hilbert space $\hil$ and use the notation $g\cdot v$, where $g\in G$ and $v\in \hil$, as shorthand for $\pi(g)(v)$. 
A subspace $V$ of a unitary representation $\hil$ is \define{invariant} if $g\cdot V \subset V$ for all $g\in G$; the representation is \define{irreducible} if the only closed invariant subspaces are $\{0\}$ and $\hil$. 

A vector $v$ in a unitary representation $\hil$ is a \define{$C^k$--vector}, where $k \in \N\cup\{\infty\}$, if the assignment $g\mapsto g\cdot v$ defines a $C^k$ map $G\to \hil$. The set of $C^\infty$--vectors is dense in $\hil$. 
A vector $v\in \hil$ is \define{spherical} if $k\cdot v = v$ for all $k\in K$ and is \define{$K$--finite} if $K\cdot v$ is contained in a finite-dimensional closed subspace of $\hil$.

Each Lie vector $X\in \mathfrak{g}$ determines an unbounded operator $L_X$ on any unitary representation $\hil$ defined on the subspace of $C^1$--vectors by the rule
\[ L_X (v) = \lim_{t\to 0} \frac{\exp(tX)\cdot v - v}{t}.\]
The \define{Casimir operator} of $\hil$ is then defined on the space of $C^2$--vectors as
\[\Omega = (L_W^2 - L_Q^2 - L_V^2)/4,\]
where $\{W,Q,V\}$ is the basis of $\mathfrak{g}$ specified in \eqref{lie_alg_basis}.
From the elementary calculation $\langle L_X(v),w\rangle = -\langle v, L_X(w)\rangle$ we see that $\Omega$ is symmetric: $\langle \Omega(v), w\rangle = \langle v, \Omega(w)\rangle$ for all $C^2$--vectors $v,w\in \hil$. It is known that closure of $\Omega$ is self-adjoint and that $\Omega$ commutes with $L_X$ for every $X\in \mathfrak{g}$ and with $\pi(g)$ for each $g\in G$: $\Omega(g\cdot v) = g\cdot \Omega(v)$. 
When $\hil$ is irreducible, Schur's Lemma therefore provides a scalar $\lambda = \lambda(\hil)$ such that $\Omega(v) = \lambda v$ for all $C^2$--vectors $v\in \hil$; the fact that $\Omega$ is symmetric assures that $\lambda(\hil)\in \R$. 
The \define{spectrum} of the unitary representation $\hil$ is
\[\Lambda(\hil) = \{\lambda\in \C \mid \Omega - \lambda I\text{ does not have a bounded inverse}\}\subset\R.\]

A function $G\to \C$  of the form $g\mapsto \langle g\cdot v, w\rangle$, where $v,w\in \hil$, is called a \define{matrix coefficient} of the unitary representation $\hil$. Much is known about the asymptotic behavior of matrix coefficients along the diagonal subgroup $A = \{g_t \mid t\in \R\}$, especially when $v$ and $w$ are $K$--finite; see for example \cite[VIII.\S13]{Knapp-ss-rep-theory}.
We shall only require the following basic lemma which is derived from Ratner's work \cite{Ratner-rate_of_mix}.

\begin{lemma}
\label{lem:decay_wrt_spectrum}
There exists a universal constant $C_0$ such that the following holds. 
Fix $0 < \delta < 1$ and let $\hil$ be a unitary representation of $G$ without nonzero invariant vectors and such that $\Lambda(\hil)\cap (0,\frac{1-\delta^2}{4}) = \emptyset$. 
Then for any spherical vectors $v,w\in \hil$ and $t\ge 1$ we have
\[\abs{\langle g_t\cdot v, w\rangle} \le C_0 \norm{v}\norm{w} t e^{-t(1-\delta)}.\]
 \end{lemma}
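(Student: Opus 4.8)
The plan is to reduce to an irreducible representation and then apply the classical asymptotics of the spherical functions of $\SL(2,\R)$, which is the form in which Ratner's estimates \cite{Ratner-rate_of_mix} are most conveniently used here.

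First I would decompose $\hil$ as a direct integral $\hil=\int_Z^{\oplus}\hil_z\,d\nu(z)$ of irreducible unitary representations of $G$. Since $v,w$ are spherical, they decompose as measurable fields $v=\int_Z^{\oplus}v_z\,d\nu$ and $w=\int_Z^{\oplus}w_z\,d\nu$ of spherical vectors, whence $\langle g_t\cdot v,w\rangle=\int_Z\langle g_t\cdot v_z,w_z\rangle\,d\nu(z)$. Thus if each fiber obeys $\abs{\langle g_t\cdot v_z,w_z\rangle}\le C_0\norm{v_z}\norm{w_z}\,t\,e^{-t(1-\delta)}$ with one universal $C_0$, then Cauchy--Schwarz in $L^2(Z,\nu)$ reassembles $\abs{\langle g_t\cdot v,w\rangle}\le C_0\,t\,e^{-t(1-\delta)}\norm{v}\norm{w}$. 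The care needed here is to transfer the hypotheses to the fibers: standard spectral theory identifies $\Lambda(\hil)$, the spectrum of the self-adjoint closure of $\Omega$, with the essential range of the multiplier $z\mapsto\lambda(\hil_z)$, so $\Lambda(\hil)\cap(0,\tfrac{1-\delta^2}{4})=\emptyset$ forces $\lambda(\hil_z)\notin(0,\tfrac{1-\delta^2}{4})$ for $\nu$--a.e.\ $z$, while the absence of nonzero invariant vectors excludes $\lambda(\hil_z)=0$ on the set where $v_z$ or $w_z$ is nonzero.

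For the irreducible case I would use the classification: an irreducible unitary representation of $G$ with a nonzero spherical vector is the trivial representation, a spherical principal series, or a complementary series; all other irreducibles have no nonzero $K$--fixed vector. The trivial case being excluded on the relevant fibers, $\hil_z$ is principal or complementary series and its spherical vectors form a line (the weight--zero $K$--type), so with $v_0$ a unit spherical vector we have $v_z=av_0$, $w_z=bv_0$ with $\abs a=\norm{v_z}$, $\abs b=\norm{w_z}$, and hence $\abs{\langle g_t\cdot v_z,w_z\rangle}=\norm{v_z}\norm{w_z}\,\abs{\varphi(g_t)}$, where $\varphi(g)=\langle g\cdot v_0,v_0\rangle$ is the elementary spherical function. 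Parametrizing the complementary series by $\nu\in(0,1)$ via $\lambda(\hil_z)=\tfrac{1-\nu^2}{4}$ and setting $\nu=0$ for the principal series (where $\lambda(\hil_z)\ge\tfrac14$, matching the $\tfrac14$ in the definition of $\Omega$), the classical spherical function asymptotics---coming from Ratner \cite{Ratner-rate_of_mix}, see also \cite[Ch.~VIII]{Knapp-ss-rep-theory}---give a universal $C_0$ with $\abs{\varphi(g_t)}\le C_0(1+t)e^{-t(1-\nu)}$ for $t\ge0$, the polynomial factor being genuinely required at the tempered bottom $\nu=0$. Since the transferred spectral bound gives $\tfrac{1-\nu^2}{4}\ge\tfrac{1-\delta^2}{4}$, i.e.\ $\nu\le\delta$, and since $1+t\le2t$ for $t\ge1$, we get $\abs{\varphi(g_t)}\le 2C_0\,t\,e^{-t(1-\delta)}$, finishing the irreducible case.

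I expect the main obstacle to be the direct-integral bookkeeping---in particular verifying that the single gap condition $\Lambda(\hil)\cap(0,\tfrac{1-\delta^2}{4})=\emptyset$, together with the absence of invariant vectors, really does bound the complementary parameter of $\nu$--a.e.\ fiber by $\delta$---and ensuring the constant in the spherical-function estimate is uniform over all $\nu\in(0,1)$, i.e.\ that the Harish--Chandra $c$--function does not spoil uniformity as $\nu\to1$; the polynomial slack $t$ in the statement is exactly what makes a single universal $C_0$ possible. (One may assume $v,w$ smooth wherever convenient, since $K$--averaged mollification makes smooth spherical vectors dense and both sides of the inequality are continuous in $(v,w)$; alternatively the classification can be bypassed by extracting the decay of $\varphi(g_t)$ directly from the radial Casimir ODE, the bound $\abs{\varphi(g_t)}\le1$, and the spectral constraint, but quoting the known asymptotics is cleaner.)
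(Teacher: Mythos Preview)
Your argument is correct. The paper takes a shorter route: rather than decomposing $\hil$ as a direct integral and analyzing the elementary spherical function on each irreducible fiber, it directly invokes Matheus's quantitative formulation \cite{Matheus-quantitativeRatner} of Ratner's Theorem~3 \cite{Ratner-rate_of_mix}, which already packages the direct-integral reduction and spherical-function asymptotics into a single bound $\abs{\langle g_t\cdot v,w\rangle}\le \tilde K_{\hil}\norm{v}\norm{w}\,b_{\hil}(t)$ valid for arbitrary (not just irreducible) representations with the given spectral gap. Since $v,w$ are spherical one has $L_W(v)=L_W(w)=0$, so the general bound simplifies; the spectral hypothesis forces $\beta(\hil)\ge\tfrac{1-\delta^2}{4}$ and hence $b_{\hil}(t)\le te^{-t(1-\delta)}$. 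Your approach has the virtue of being more self-contained and of making explicit where each hypothesis enters---the gap bounds the complementary parameter by $\delta$, the absence of invariant vectors removes the trivial fiber---at the cost of the direct-integral bookkeeping you anticipate; the paper's approach buys brevity by outsourcing all of that to the cited references.
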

\begin{proof}
The lemma follows immediately from the quantitative version of \cite[Theorem 3]{Ratner-rate_of_mix} that Matheus gives in \cite{Matheus-quantitativeRatner}. For this, set  $\lambda = \beta(\hil) = \inf\left(\Lambda(\hil)\cap (0,\tfrac{1}{4})\right)$ (note that our Casimir operator $\Omega$ is the negative of that used in \cite{Matheus-quantitativeRatner}). The relevant terms from  \cite{Matheus-quantitativeRatner} are then the function
\[b_\hil(t) = b_\lambda(t) 
= \begin{cases}
    t e^{-t}, &  \lambda \ge \tfrac{1}{4}\\
    t e^{-(1-\sqrt{1-4\lambda})t}, & 0 < \lambda < \tfrac{1}{4}\\
    t e^{-2t}, & \lambda \le 0
  \end{cases}
\]
and the constant
\[\tilde{K}_\hil = \tilde{K}_\lambda \le \max\left\{
(1+2\sqrt{2})e + \tfrac{32 + \sqrt{2}}{3 e^3 (1-e^{-4})^2},\; 3e +e^2 + \tfrac{4}{9e^3(1-e^{-4})},\; e^2\right\}\equalscolon C_0.\]
Our hypothesis on $\Lambda(\hil)$ implies $\lambda \ge \frac{1-\delta^2}{4}$ and accordingly $-(1-\sqrt{1-4\lambda})\le -(1-\delta)$.  Since $-2 \le -1 \le -(1-\delta)$ as well, in any case we have $b_\hil(t) \le t e^{-t(1-\delta)}$.

We may now apply \cite[Theorem 2]{Matheus-quantitativeRatner} to estimate $\abs{\langle g_t \cdot v, w\rangle}$. 
Since our vectors $v,w$ are spherical, they satisfy $L_W(v) = L_W(w) = 0$ and therefore the bound reduces to
\[\abs{\langle g_t \cdot v, w\rangle} \le \tilde{K}_{\hil}\norm{v}\norm{w}b_{\hil}(t) 
\le C_0 \norm{v}\norm{w} t e^{-t(1-\delta)}. \qedhere\]
\end{proof}

\begin{remark}
Related estimates on $\abs{\langle g_t\cdot v, w\rangle}$ hold more generally but are more involved when $v,w$ are not spherical.
It is likely that our arguments could be carried out, albeit with more technical considerations, in the case of shrinking targets that are merely $K$--finite. 
\end{remark}

\subsection{Teichm\"uller space and its metric}
\label{sec:teich_space}

Throughout we let $\surf$ denote a fixed orientable closed surface $\surf$ of genus $g\ge 0$ equipped with a (potentially empty) finite set $\punct\subset \surf$ of distinguished points that we term \define{punctures}. 
Writing $p = \abs{\punct} \ge 0$, we assume that $\chi(\surf\setminus \punct)<0$ and that $(g,p) \ne (0,3)$, which is equivalent to $3g-3+p > 0$.
We write $\T = \T(\surf)$ for the \define{Teichm\"uller space} of Riemann surface structures on $\surf$ up to isotopy fixing $\punct$.
The \define{mapping class group} $\Mod(\surf)$ is the quotient 
of the group $\Homeo^+(\surf,\punct)$ of orientation-preserving homeomorphisms of $\surf$ that setwise preserve $\punct$ by the path component $\Homeo_0(\surf,\punct)$ of the identity.
The group 
of homeomorphisms naturally
acts on the set of Riemann surface structures---a homeomorphism $\phi$ sends a Riemann surface structure with atlas $\{z_\alpha\colon \surf\supset U_\alpha\to \C\}$ to the structure defined by the atlas $\{z_\alpha \circ \phi\inv\}$---and this descends to an action of $\Mod(\surf)$ on $\T$. The quotient is the \define{moduli space} $\M = \M(\surf)$ of Riemann surfaces 
 of genus $g$ with $p$ distinguished points.

The \define{Teichm\"uller distance} $d_\T$ between two Riemann surface structures $X$ and $Y$ on $\surf$ is the infimum of $\frac{1}{2}\log(K_\phi(X,Y))$ over all 
maps $\phi\in \Homeo_0(\surf,\punct)$ that are quasiconformal with respect to these structures, 
where $K_\phi(X,Y)$ denotes the quasiconformal constant of $\phi$. Topologically, $\T$ is a smooth manifold homeomorphic to $\R^{6g-6+2p}$, and there is a Finsler metric on $\T$ such that the Teichm\"uller distance $d_\T(X,Y)$ is realized as the shortest Finsler length of a path from $X$ to $Y$. This metric is invariant under the $\Mod(\surf)$ action and thus descends to a Finsler metric on $\M$ with associated path metric $d_\M$

\subsection{Notation}
Given some parameter $\delta$ (such as a point in space or a measure on that space) we use the notation $A \ll_\delta B$ or $B \gg_\delta A$ to mean that there is a constant $c > 0$, depending only on our fixed surface $\surf$ and the parameter $\delta$, such that $A \le cB$. We write $A \asymp_\delta B$ to mean $A \ll_\delta B$ and $B \ll_\delta A$. We similarly use $\ll$ and $\asymp$ when the constant $c$ depends only on $\surf$.

\subsection{Moduli space of differentials}
\label{sec:quadratic_diffs}

An \define{abelian differential} on a Riemann surface structure $X\in \T(\surf)$ is a holomorphic $1$--form, that is, a section of the holomorphic cotangent bundle of $X$. A \define{quadratic differential} is a section of the symmetric square of the meromorphic cotangent bundle whose poles are all simple and occur in the distinguished set $\punct$. 
Abelian and quadratic differentials may, respectively, be concretely represented in local coordinates $\{z_\alpha \colon U_\alpha \to \C\}$ by families of functions $h_\alpha, \phi_\alpha\colon z_\alpha(U_\alpha)\to \C$ such that on any two overlapping charts $z_\alpha$ and $z_\beta$ one has
\[h_\beta(z_\beta) \left(\frac{d z_\beta}{d z_\alpha}\right) = h_\alpha(z_\alpha)
\qquad\text{or, respectively,}\qquad
  \phi_\beta(z_\beta)\left(\frac{d z_\beta}{d z_\alpha}\right)^2 = \phi_\alpha(z_\alpha)\]
and, furthermore, the functions $h_\alpha$ are holomorphic while the functions $\phi_\alpha$ have only simple poles and are holomorphic on $\surf\setminus \punct$.
In particular, the location and multiplicities of the zeros and poles of an abelian/quadratic differential on $X$ are well-defined.
From this it is clear that each abelian differential $\omega$ or quadratic differential $q$ gives rise to a natural coordinate atlas $\{z_\alpha \colon U_\alpha \to \C\}$ for $X$ such that each $U_\alpha$ contains at most one zero and the above functions take the form $h_\alpha(z_\alpha) = z_\alpha^k$ or $\phi_\alpha(z_\alpha) = z_\alpha^k$ when $U_\alpha$ contains a zero of order $k\ge -1$ and  otherwise have the form $h_\alpha = 1$ or $\phi_\alpha = 1$. We call these the \emph{translation charts} of $\omega$ or $q$ since they define a translation surface structure on $X\setminus\{\text{zeros of $\omega$}\}$ in the abelian case, or a half-translation surface structure on $X\setminus\{\text{zeros of $q$}\}$ in the quadratic case. In this translation structure, an order--$k$ zero of $\omega$ acquires a cone angle of $2\pi(k+1)$ whereas an order--$k$ zero of $q$ acquires a cone angle of $\pi(k+2)$.

We write $\ad(X)$ and $\qd(X)$ for the $\C$--vector spaces of abelian
and quadratic differentials, respectively, on a Riemann surface $X\in \T$. 
For a tuple $\beta = (\beta_1,\dotsc,\beta_n;0)$ where $n\ge p$ and $\beta_i\ge 0$ are integers such that $\sum_i \beta_i = 2g-2$ and $\beta_i\ge 1$ for $i > p$, 
we let $\ad(X,\beta)$ denote the subset of abelian differentials on $X$ that have zeros of orders $\{\beta_1,\dotsc, \beta_p\}$ at the punctures and $n-p$ other zeros of orders $\{\beta_{p+1},\dotsc, \beta_n\}$.
Similarly for $\kappa = (\kappa_1,\dotsc, \kappa_n;\epsilon)$, where $n\ge p$, $\epsilon\in \{\pm1\}$, and $\kappa_i\ge -1$ are integers with $\sum_i \kappa_i = 4g-4$ and $\kappa_i\ge 1$ for $i> p$, we write $\qd(X,\kappa)$ for the subset of quadratic differentials with zeros of orders $\{\kappa_1,\dotsc, \kappa_p\}$ at the punctures and $n-p$ other zeros or orders $\{\kappa_{p+1},\dotsc,\kappa_n\}$ that are, if $\epsilon = 1$, or are not, if $\epsilon = -1$, the square of an abelian differential.

To economize on notation, we let $\diff(X)$ be the disjoint $\ad(X)\sqcup \qd(X)$. 
Further, for any integer vector $\alpha = (\alpha_1,\dotsc,\alpha_n;\epsilon)$ we respectively define $\diff(X,\alpha)$ to mean $\ad(X,\alpha)$ or $\qd(X,\alpha)$ in the cases that $\alpha$ satisfies the conditions on $\beta$ or $\kappa$ above.
For any such space, the notation $\uad$, $\uqd$, or $\udiff$ will indicate the corresponding subset of differentials that induce (half) translation structures with total area $1$.
As $X$ varies in $\T$ we obtain vector bundles $\ad\T$ and $\qd\T$ 
over Teichm\"uller space which, under the quotient by the action of the mapping class group, descend to bundles $\ad,\qd\to \M$
whose disjoint union $\diff = \diff(\surf)$ is stratified by the associated subsets $\diff(\alpha)$.

The bundles $\ad\T$ and $\qd\T$ are endowed with an action of the group $G = \SL(2,\R)$ of real $2\times 2$ matrices as follows: Given a differential $\xi$ with translation charts $\{z_\alpha\colon U_\alpha \to \C\}$, postcomposing by $A\in G$ defines translations charts $\{A\circ z_\alpha \}$ for a new differential $A\xi$ with the same total area and orders of zeros, but over a potentially different Riemann surface. This commutes with the action of $\Mod(\surf)$ by precomposition and so descends to an action $G\curvearrowright \diff$ that preserves all the strata $\udiff(\alpha)$ introduced above.

\subsection{Invariant measures}
\label{sec:invariant_measures}

A (quadratic or abelian) differential $\xi$ allows one to develop any path $\gamma$ in $\surf$ in the translation charts and  measure its ``total displacement'' in $\C$. In this way the differential $\xi$ defines a relative cohomology class $\Phi(\xi)\in H^1(\surf,Z(\xi);\C)$, where $Z(\xi)\subset \surf$ denotes the union of $\punct$ with the zeros of $\xi$. Choosing a symplectic basis $\{\gamma_1,\dotsc, \gamma_m\}$ allows us to view this class in coordinates as
\[\Phi(\xi)\in H^1(\surf,Z(\xi);\C)= \mathrm{Hom}(H_1(\surf,Z(\xi);\C),\C)\cong\C^m.\]
By consistently choosing symplectic bases of $H_1(\surf,Z(\xi');\C)$ for all $\xi'$ in some neighborhood of $\xi$, in this way we obtain locally defined maps
\[\Phi_\alpha\colon \diff(\alpha)\to \C^{m_\alpha}\]
on each stratum. These \define{period coordinates} are local diffeomorphisms and endow each stratum with a canonical complex affine structure. Pulling back Lebesgue measure from $\C^{m_\alpha}$ defines a canonical \define{Masur--Veech} measure $\lambda_\alpha$ on each stratum $\diff(\alpha)$ that is independent of the choice of basis.
Furthermore, $\lambda_\alpha$ induces a measure $\lambda_\alpha^1$ on the subset $\udiff(\alpha)$ of unit area differentials by the rule $\lambda_\alpha^1(U) = \lambda_\alpha(\{tU : t\in (0,1]\})$.
Since $\SL(2,\R)$ acts via area-preserving transformations of the translation charts, the Masur--Veech measures $\lambda_\alpha$ are invariant under the $\SL(2,\R)$ action.

It is a fundamental result of Masur \cite{Masur} and Veech \cite{Veech} that each measure $\lambda_\alpha^1$ has finite total mass and is ergodic for the $\SL(2,\R)$ action on $\diff$. While there are many other ergodic measures on $\diff$ (for example, coming from orbit closures $\overline{G\xi}$), the celebrated work of Eskin and Mirzakhani \cite{EM-invariantMeas} shows that every such measure is \define{affine} in the sense that it essentially arises from the Lebesgue measure of an immersed submanifold $Y\looparrowright \diff(\alpha)$ that is a affine in the period coordinates of some stratum. 

Given any probability measure $\mu$ on the space $\udiff$ of unit-area (abelian or quadratic) differentials, we may consider the Hilbert space $L^2(\udiff,\mu)$ of square-integrable functions. The $\SL(2,\R)$ action makes this into a unitary representation of $G$ in which $A\in G$ sends a function $f\in L^2(\udiff,\mu)$ to the new function $\xi \mapsto (A\cdot f)(\xi) = f(A\inv \xi)$.
We rely crucially on the following result of Avila and Gou\"ezel, which, in this formulation, itself depends on the affine structure provided by Eskin and Mirzakhani \cite{EM-invariantMeas}:

\begin{theorem}[Avila--Gou\"ezel \cite{AvilaGouezel}]
\label{thm:spectral_gap}
Let $\mu$ be any $\SL(2,\R)$--invariant, ergodic probability measure on $\udiff$. Then the Casimir operator for the unitary representation $L^2(\udiff,\mu)$ has a spectral gap. More precisely,  $\Lambda(L^2(\udiff,\mu))\cap (0,\frac{1-\delta^2}{4}) = \emptyset$ for some $0 < \delta < 1$.
\end{theorem}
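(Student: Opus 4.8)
The plan is to deduce the spectral gap from uniform exponential mixing of the Teichm\"uller geodesic flow, which is essentially the form in which Avila and Gou\"ezel establish it. The first step is the standard dictionary from the unitary representation theory of $G = \SL(2,\R)$: for a representation $\hil$ with no nonzero invariant vectors, the condition that $\Lambda(\hil)\cap(0,\tfrac{1-\delta^2}{4})=\emptyset$ for some $\delta\in(0,1)$, i.e.\ that the positive part of the Casimir spectrum is bounded away from $0$, is equivalent to the existence of constants $C,\rho>0$ with $\abs{\langle g_t\cdot v,w\rangle}\le C\norm{v}_{k}\norm{w}_{k}e^{-\rho t}$ for all $K$--finite smooth vectors $v,w$, where $\norm{\cdot}_k$ is a Sobolev-type norm measuring $K$--weights. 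One implication is \Cref{lem:decay_wrt_spectrum} together with its (more involved) analogue for $K$--finite vectors; the reverse implication follows by decomposing $\hil$ into a direct integral of irreducibles and observing that Casimir spectrum accumulating at $0$ from above would produce complementary-series spherical vectors whose matrix coefficients decay arbitrarily slowly. Since $\mu$ is ergodic, $L^2(\udiff,\mu)=\C\oplus L^2_0(\udiff,\mu)$ with $L^2_0$ admitting no invariant vectors, so it suffices to show that $g_t$ mixes exponentially fast on $(\udiff,\mu)$ against H\"older observables.

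For the second step, the Eskin--Mirzakhani classification \cite{EM-invariantMeas} presents the ergodic measure $\mu$ as the normalized affine measure of an affine invariant submanifold $\mathcal N\looparrowright\diff(\alpha)$ in the period coordinates of a single stratum (recall that $\mu$ is supported on one of $\uad$, $\uqd$), with $g_t$ preserving $\mathcal N$. One then presents the flow symbolically: Rauzy--Veech--Zorich renormalization, as developed for exponential mixing in \cite{AvilaGouezelYoccoz}, realizes $(g_t,\mathcal N,\mu)$---after passing to an appropriate cross-section---as a suspension flow over a hyperbolic Markov map $T$ with roof function $r$, in such a way that the affine structure of $\mathcal N$ makes the stable and unstable holonomies H\"older and the flow a hyperbolic skew-product of the kind amenable to the Dolgopyat method.

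The third step carries the real content: one must verify the two quantitative inputs that force exponential decay of correlations, namely (a) that the roof function has \emph{exponential tails}, $\mu(r>s)\le Ce^{-cs}$---equivalently, a quantitative estimate bounding the measure of the ``thin part'' of $\mathcal N$ consisting of differentials carrying a short saddle connection---and (b) a uniform non-integrability (UNI) condition for $r$, producing the oscillatory cancellation needed to bound the twisted transfer operators $\mathcal L_{\sigma+ib}$ for $\abs b$ large. Granting (a) and (b), the Dolgopyat-type analysis of the transfer operator yields exponential decay of correlations for H\"older functions on the base, which lifts to exponential mixing of $g_t$ on $(\udiff,\mu)$; combined with the first step, this gives the Casimir spectral gap.

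I expect the genuine obstacle to be input (a) for an arbitrary affine submanifold: controlling geodesic excursions into the cusp of $\M$ already required, for whole strata in \cite{AvilaGouezelYoccoz}, a delicate analysis of the Rauzy--Veech combinatorics, and extending the requisite ``thin part'' measure estimates to a general affine $\mathcal N$---where the renormalization combinatorics is constrained by the defining affine equations---is precisely the additional work done in \cite{AvilaGouezel}. Accordingly, in the present paper one simply cites \cite{AvilaGouezel} for the second and third steps and, if anything, spells out only the representation-theoretic reduction of the first.
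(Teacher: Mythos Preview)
The paper gives no proof of this theorem: it is quoted verbatim as a result of Avila--Gou\"ezel and used as a black box, exactly as your final sentence anticipates. So there is nothing to compare on the paper's side.

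That said, your sketch of \emph{how} Avila--Gou\"ezel obtain the gap is not quite right, and the inaccuracy matters. The Rauzy--Veech--Zorich coding plus Dolgopyat/UNI route you describe is the strategy of Avila--Gou\"ezel--Yoccoz \cite{AvilaGouezelYoccoz} for the Masur--Veech measure on a connected component of a stratum; it is not available for a general affine invariant submanifold $\mathcal N$, since there is no renormalization scheme adapted to the extra linear constraints cutting out $\mathcal N$. Avila--Gou\"ezel \cite{AvilaGouezel} instead work directly with the $\SL(2,\R)$ action: the key analytic input is an \emph{exponential recurrence} estimate---a uniform exponential bound on the $\mu$--mass of differentials whose $g_t$--orbit stays in the thin part for a long time---valid for every algebraic measure. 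From this they deduce the spectral gap by an argument on the foliated hyperbolic Laplacian (equivalently, the Casimir operator restricted to spherical vectors), without any symbolic coding, transfer operators, or UNI condition. Your step (a) is morally the right obstacle, but it is not the roof function of a suspension; your step (b) plays no role. The representation-theoretic dictionary in your first step is correct and is indeed the bridge between their formulation and the one stated here.
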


\subsection{The measures of balls}
\label{sec:ball_measures}

In order to derive logarithm laws for the geodesic flow, we need to know the measure of certain geometrically significant subsets of $\udiff$. For this, we consider the principle stratum $\diff(\alpha) = \qd(\alpha)$ where $\alpha = (-1,\dotsc,-1,1,\dotsc, 1;-1)$. The associated measure $\lambda_\alpha^1$ pushes forward, under the bundle map $\pi\colon \qd\to \M$, to a measure ${\sf n} = \pi_*(\lambda_\alpha^1)$ on $\M$. The Finsler metric on $\M$ induces a \emph{Buseman volume form} that in turn gives rise to an associated measure $\nu$ on $\M$; in fact $\nu$ is the Hausdorff measure for the Teichm\"uller metric \cite{Alvarez-Thompson,Busemann}. Using Masur's result that the Teichm\"uller geodesic flow is Hamiltonian \cite{Masur-hamiltonian}, in \cite[Corollary 4.7]{ddm-stathyp} it was shown that the measures $\nu$ and ${\sf n}$ are absolutely continuous and in fact related by inequalities $c_1 {\sf n} \le \nu \le c_2 {\sf n}$ for some scalar $c_2 > c_1 > 0$. This gives rise to the following estimate:

\begin{lemma}
\label{lem:small_balls}
Fix $X\in \M$. For all sufficiently small $r >0$ the ${\sf n}$--measure of the Teichm\"uller metric ball $B_X(r) \subset \M$ of radius $r$ satisfies
\[{\sf n}(B_X(r)) \asymp_X  r^{6g-6+2n}\]
\end{lemma}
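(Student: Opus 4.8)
The plan is to compute the measure ${\sf n}(B_X(r))$ by relating it to a more tractable volume form and then estimating that. The key observation is that, by the cited result from \cite{ddm-stathyp}, the measures $\nu$ and ${\sf n}$ are mutually absolutely continuous with densities bounded above and below by positive constants, so ${\sf n}(B_X(r)) \asymp \nu(B_X(r))$. Thus it suffices to prove $\nu(B_X(r)) \asymp_X r^{6g-6+2n}$, and for this we may work entirely with the Buseman volume form $\nu$ attached to the Finsler (Teichm\"uller) metric on $\M$. Here $6g-6+2n = \dim_\R(\M) = \dim_\R(\T)$, since $\M$ is locally modeled on $\R^{6g-6+2p}$ and (in the principle stratum setup) $n = p$.

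First I would lift the problem to Teichm\"uller space: since $X\in \M$ and, away from the orbifold locus, the covering $\T\to \M$ is a local isometry, for $r$ small enough the ball $B_X(r)\subset \M$ is isometric to a ball $B_{\tilde X}(r)\subset \T$ of the same radius about a lift $\tilde X$, and $\nu(B_X(r))$ equals the Buseman volume of $B_{\tilde X}(r)$ computed in $\T$. (If $X$ is an orbifold point one instead gets $\nu(B_X(r)) \asymp_X \mathrm{vol}(B_{\tilde X}(r))$ with the implicit constant absorbing the order of the stabilizer, which is harmless since the statement is only up to $\asymp_X$.) So the task reduces to: the Buseman volume of a small metric ball of radius $r$ in the Finsler manifold $\T$ about a fixed point $\tilde X$ is $\asymp_{\tilde X} r^{\dim \T}$.

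Next, I would invoke the fact that the Teichm\"uller metric is a continuous (indeed, at interior points, smooth on each cotangent sphere — the relevant regularity from Masur's Hamiltonian description \cite{Masur-hamiltonian}) Finsler metric on the smooth manifold $\T$. For any continuous Finsler metric, on a fixed compact neighborhood of $\tilde X$ the unit cotangent balls vary continuously and hence the Finsler norm is bi-Lipschitz equivalent to any fixed background Euclidean norm, with constants depending only on the neighborhood (hence on $\tilde X$). Consequently the metric ball $B_{\tilde X}(r)$ is sandwiched between Euclidean balls of radii $\asymp_{\tilde X} r$, and the Buseman volume form $\nu$ is likewise comparable to Lebesgue measure in these coordinates. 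Both comparisons give ${\sf n}(B_X(r)) \asymp \nu(B_X(r)) \asymp_{\tilde X} r^{\dim \T} = r^{6g-6+2n}$, which is the claim.

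The main obstacle is making the regularity input precise: one must be careful that the Teichm\"uller norm is genuinely comparable to a smooth background norm near $\tilde X$ — this uses that $\T$ has no conical singularities at interior points (every point of $\T$ is a manifold point, with only the mapping-class-group action producing orbifold behavior downstairs) and that the unit ball of the Teichm\"uller norm in $T^*_{\tilde X}\T$ is a genuine convex body varying continuously with the basepoint, so its Minkowski functional is bounded above and below on a compact neighborhood. Once this local bi-Lipschitz control is in hand, the volume estimate is a routine scaling argument, and tracking that the implicit constants may legitimately depend on $X$ (equivalently $\tilde X$) but not on $r$ completes the proof.
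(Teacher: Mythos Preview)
Your proposal is correct and follows essentially the same approach as the paper: reduce to the Buseman volume $\nu$ via the ${\sf n}\asymp\nu$ comparison, then use that the Teichm\"uller Finsler norm is locally bi-Lipschitz equivalent to a Euclidean background norm on a compact neighborhood of the basepoint to conclude that both the metric balls and the volume form scale like $r^{6g-6+2n}$. The only cosmetic difference is that you lift to $\T$ to handle potential orbifold points, whereas the paper works directly in a coordinate chart on $\M$; this is a harmless variation and the underlying argument is the same.
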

\begin{proof}
Choose a coordinate chart $\varphi\colon V\to \R^{6g-6+2n}$ on some compact set $V\subset\M$ whose interior contains $X$. 
Let $\nu^\varphi$ and $B^\varphi_X(r)$ denote the Riemannian volume and metric balls associated to the standard Riemannian metric in $\varphi$--coordinates.
Since all norms on $\R^k$ are equivalent and $V$ is compact, there is a constant $c >1$ so that the Euclidean norm in $\varphi$--coordinates agrees with the Teichm\"uller Finsler norm up to a multiple of $c$. It follows that the measures $\nu^\varphi$ and $\nu$ agree up to a multiple of $c^{6g-6+2p}$ and that for all sufficiently small $r > 0$ we have $B_X(r) \le B_X^\varphi(rc)$ and $B_X^\varphi(r) \subset B_X(rc)$. The lemma now follows by noting
\[{\sf n}(B_X(r)) \asymp \nu(B_X(r)) \asymp_c \nu^\varphi(B_X^\varphi(r)) \asymp r^{6g-6+2n}.\qedhere\]
\end{proof}

\section{Shrinking targets}
\label{sec:shrinking_targets}
For the entirety of this section, we fix an ergodic, $G$--invariant probability measure $\mu$ on the moduli space $\udiff$ of unit area abelian or quadratic differentials on $\surf$. The Hilbert space $\ltwo = L^2(\udiff,\mu)$ of square-integrable complex-valued functions is then a unitary representation of $G$ on which $A\in G$ sends $f\in \ltwo$ to the function $A\cdot f$ given by $\xi \mapsto (A\cdot f)(\xi) = f(A\inv \xi)$. For $f\in \ltwo$ we write $\norm{f}_2 = (\int f\bar{f} d\mu)^{1/2}$ for the norm of $f$ in $\ltwo$.
Let $\ltwo_0\le \ltwo$ be the closed subspace of $G$--invariant vectors, and let $\ltwo_0^\perp$ denote its orthogonal complement. 
For any $f\in \ltwo$, we let $f^0 \in \ltwo_0$ denote its orthogonal projection to $\ltwo_0$ and set $f' = f-f^0 \in \ltwo_0^\perp$.  Since the action $G\curvearrowright \udiff$ is ergodic, every $G$--invariant function is $\mu$--almost everywhere constant. Thus $\{1\}$ is a basis for $\ltwo_0$ and $f^0$ is equal in $\ltwo$ to the constant function $\langle f,1\rangle$. Observe that $f'$ is spherical if and only if $f$ is spherical.

 By \Cref{thm:spectral_gap} there exists $0 < \delta < 1$ so that the $\Lambda(\ltwo)$ and hence also $\Lambda(\ltwo_0^\perp)$ is disjoint from $(0,\frac{1-\delta^2}{4})$. 
Since $\ltwo_0^\perp$ does not have nonzero invariant vectors, if follows from \Cref{lem:decay_wrt_spectrum} that there exists a constant $\sigma > 0$ depending only on $\mu$ such that 
\begin{equation}
\label{eqn:correlation_decay}
\abs{\langle g_t \cdot f', h'\rangle } \ll \norm{f'}_2\norm{h'}_2t e^{-t\sigma}\qquad\text{for all }t\ge 1\text{ and all spherical }f,h\in \ltwo.
\end{equation}

\subsection{Effective mean ergodic theorem}
\label{sec:mean_ergodic}
Consider the average of a function $f\in \ltwo$ over the first $n\ge 1$ iterates of the geodesic flow: 
\begin{equation}
\label{eqn:avg_operator}
\beta_n(f) = \frac{1}{n}\sum_{j=1}^{n} g_{-j}\cdot f,\qquad\text{so}\qquad (\beta_n(f))(\xi) = \frac{f(g_1\xi) + \dotsb + f(g_{n} \xi)}{n}.
\end{equation}
The Birkhoff ergodic theorem implies that the images $\beta_n(f)$ of $f$  under the operators $\beta_n\colon \ltwo\to\ltwo$ pointwise almost everywhere converge to the constant function $\int f d\mu = \langle f,1\rangle$ as $n\to \infty$. When $f$ is spherical, $\{\beta_n(f)\}_n$ in fact converges in $\ltwo$ at a definite rate depending on $\norm{f}_2$ as described by the following effective mean ergodic theorem:

\begin{theorem}
\label{th:emet}
For any spherical $f\in \ltwo = L^2(\udiff,\mu)$ and $n\ge 1$, we have
\[\norm{\beta_n(f) - f^0}_2 \ll_\mu \frac{\norm{f}_2}{n^{1/2}}\]
\end{theorem}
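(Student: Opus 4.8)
The plan is to expand $\norm{\beta_n(f)-f^0}_2^2$ as a double sum of matrix coefficients and then apply the correlation decay estimate~\eqref{eqn:correlation_decay} to each term. First I would reduce to the case $f = f' \in \ltwo_0^\perp$: since $f^0$ is $G$--invariant, $\beta_n(f^0) = f^0$, and the operator $\beta_n$ is linear, so $\beta_n(f) - f^0 = \beta_n(f')$; moreover $f'$ is spherical because $f$ is (as noted just before \Cref{th:emet}), and $\norm{f'}_2 \le \norm{f}_2$. Thus it suffices to prove $\norm{\beta_n(f')}_2 \ll_\mu \norm{f'}_2/n^{1/2}$ for spherical $f' \in \ltwo_0^\perp$.

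Next I would write
\[
\norm{\beta_n(f')}_2^2 = \left\langle \frac{1}{n}\sum_{j=1}^n g_{-j}\cdot f',\ \frac{1}{n}\sum_{k=1}^n g_{-k}\cdot f'\right\rangle = \frac{1}{n^2}\sum_{j=1}^n\sum_{k=1}^n \langle g_{k-j}\cdot f', f'\rangle,
\]
using unitarity of the $G$--action to move $g_{-j}$ across the inner product. The diagonal terms $j = k$ contribute $\frac{1}{n^2}\cdot n\cdot\norm{f'}_2^2 = \norm{f'}_2^2/n$, which is already of the desired size. For the off-diagonal terms, grouping by $m = \abs{k-j} \ge 1$ gives at most $2n$ pairs with a given value of $m$, so the off-diagonal contribution is bounded by $\frac{2}{n^2}\sum_{m=1}^{n-1} n\,\abs{\langle g_m\cdot f', f'\rangle} \le \frac{2}{n}\sum_{m=1}^{\infty}\abs{\langle g_m\cdot f', f'\rangle}$. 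Now I apply~\eqref{eqn:correlation_decay} with $t = m \ge 1$ (both arguments being the spherical vector $f'$): $\abs{\langle g_m\cdot f', f'\rangle} \ll \norm{f'}_2^2\, m\, e^{-m\sigma}$. Since $\sum_{m\ge 1} m e^{-m\sigma}$ converges to a constant depending only on $\sigma$, hence only on $\mu$, the off-diagonal contribution is $\ll_\mu \norm{f'}_2^2/n$. Combining the two pieces yields $\norm{\beta_n(f')}_2^2 \ll_\mu \norm{f'}_2^2/n$, and taking square roots finishes the proof.

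The argument is essentially a soft application of the spectral gap, so there is no serious obstacle, but there is one point deserving care: the estimate~\eqref{eqn:correlation_decay} is only stated for $t \ge 1$, which is why the off-diagonal sum is naturally indexed by integer separations $m \ge 1$ — conveniently all of these satisfy $m \ge 1$, so no separate treatment of ``small $t$'' is needed. One should also double-check that~\eqref{eqn:correlation_decay} applies: it requires the vectors to be spherical (satisfied, since $f'$ is spherical) and lives in $\ltwo_0^\perp$, which has no nonzero invariant vectors, matching the hypothesis of \Cref{lem:decay_wrt_spectrum}. If one wanted to avoid even mentioning $\ltwo_0^\perp$, one could instead observe directly that $\langle g_m \cdot f', f'\rangle = \langle g_m\cdot f, f\rangle - \abs{\langle f,1\rangle}^2$ and bound this, but routing through the orthogonal decomposition as set up in the preamble to this section is cleaner.
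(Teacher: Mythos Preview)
Your proof is correct and follows essentially the same approach as the paper: reduce to $f'\in\ltwo_0^\perp$, expand $\norm{\beta_n(f')}_2^2$ as a double sum of matrix coefficients $\langle g_{k-j}\cdot f',f'\rangle$, group by the separation $\abs{k-j}$, and apply the correlation decay~\eqref{eqn:correlation_decay} together with the convergence of $\sum_m m e^{-m\sigma}$. The only cosmetic difference is that the paper folds the diagonal term into a single sum $\tfrac{2}{n}\sum_{i=0}^{n-1}\abs{\langle g_i\cdot f',f'\rangle}$ rather than treating it separately, but the content is identical.
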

\begin{proof} 
Since $f^0\in \ltwo_0$ is invariant under $\beta_n$, we first observe that
\[\beta_n(f) = \beta_n(f^0 + f') = \beta_n(f^0) + \beta_n(f') = f^0 + \beta_n(f').\]
Therefore $\norm{\beta_n(f) - f^0}_2 = \norm{\beta_n(f')}_2$.
To bound latter quantity, we write
\begin{align}
\label{eqn:norm_bound_average}
\norm{\beta_n(f')}_2^2  &=  \frac{1}{n^2}\sum_{j=1}^{n} \sum_{k=1}^{n} \langle g_{-j}\cdot f', g_{-k} \cdot f'\rangle
=  \frac{1}{n^2}\sum_{j=1}^{n} \sum_{k=1}^{n} \langle g_{k-j}\cdot f', f'\rangle
\end{align}
Noting that the quantity $C_i = \#\{(j,k) \mid j,k=1,\dotsc, n\text{ with }k-j = i\}$ is at most $n$ for every $i$, and using the fact $\langle g_{-i}\cdot f', f'\rangle = \overline{\langle g_i\cdot f', f'\rangle}$, equation (\ref{eqn:norm_bound_average}) gives
\begin{align*}
\norm{\beta_n(f')}_2^2  &=  \abs{\frac{1}{n^2}\sum_{j=1}^{n} \sum_{k=1}^{n} \langle g_{k-j}\cdot f', f'\rangle}
= \frac{1}{n^2}\abs{\sum_{i = 1-n}^{n-1} C_i \langle g_i\cdot f', f'\rangle}
\le \frac{2}{n}\sum_{i = 0}^{n-1} \abs{\langle g_i\cdot f', f'\rangle}.
\end{align*}
Invoking equation \eqref{eqn:correlation_decay} and using $\sum_{i=1}^\infty ie^{-i\sigma} < \infty$, we now conclude the required estimate
\[
\norm{\beta_n(f) - f^0}_2^2 = \norm{\beta_n(f')}_2^2 \ll \frac{2}{n}\norm{f'}_2^2\left(1 + \sum_{i=1}^{n-1}ie^{-i\sigma}\right) \ll_\mu \frac{\norm{f}_2^2}{n}.\qedhere
\]
\end{proof}

As we will see in \S\ref{sec:hit_the_targets} below, in the case of spherical shrinking targets $\targs = \{\tar_n\}_{n\in \N}$ with $\{n\mu(\tar_n)\}$ unbounded, the effective mean ergodic theorem provides strong results about typical hitting frequencies and the hitting sets $\hit_g(\targs)$ and $\eahit_g(\targs)$. In the absence of this strong condition on $\{n\mu(\tar_n)\}$, we must take a different approach based on independence:

\subsection{Quasi-independence}
\label{sec:quasi-indep}

Consider a family $\targs$ of spherical, $\mu$--measurable shrinking targets $\tar_1 \supset B_2 \supset \dotsb$ in $\udiff$. We are are interested in the associated sets
\[E_n = g_n\inv(\tar_n) = \{\xi\in \udiff \mid g_n \xi\in \tar_n\}.\]
Notice that $\mu(E_n) = \mu(B_n)$. As mentioned in the introduction, if the events $E_n$ were independent in the sense that $\mu(E_{n_1}\cap\dotsb \cap E_{n_k}) = \mu(E_{n_1})\dotsb \mu(E_{n_k})$ for all distinct indices $n_1,\dotsc, n_k$, the converse to the Borel--Cantelli lemma would imply $\mu(\hit_g(\targs)) = 1$. In general, this independence condition need not hold for the family $\targs$, but the spectral theory of $\ltwo =L^2(\udiff,\mu)$ guarantees that a weaker form of \emph{quasi-independence} does:

\begin{proposition}
\label{prop:quasi-indep}
Let $\tar_1 \supset \tar_2 \supset \dotsb$ be a sequence of $\mu$--measurable spherical subsets $\tar_n\subset \udiff$. For $n\ge 1$ set $E_n = g_n\inv(\tar_n)$. Then for all $N> M\ge 1$ we have
\[\sum_{m,n = M}^{N} \Big(\mu(E_m\cap E_n) - \mu(E_m)\mu(E_n)\Big) \ll_\mu \sum_{n=M}^N \mu(\tar_n).\]
\end{proposition}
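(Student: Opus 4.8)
The plan is to recast each covariance $\mu(E_m\cap E_n) - \mu(E_m)\mu(E_n)$ as a single matrix coefficient of the unitary representation $\ltwo = L^2(\udiff,\mu)$ and then feed it into the correlation decay estimate \eqref{eqn:correlation_decay}. First I would set $f_n = \mathbf{1}_{\tar_n}\in \ltwo$; since $\tar_n$ is spherical (i.e.\ $K$--invariant), $f_n$ is a spherical vector. Because the $G$--action is $(A\cdot f)(\xi) = f(A\inv\xi)$, one has $\mathbf{1}_{E_n} = g_{-n}\cdot f_n$, so using that the $G$--action on $\ltwo$ is unitary and that the $f_n$ are real-valued,
\[\mu(E_m\cap E_n) = \int (g_{-m}\cdot f_m)(g_{-n}\cdot f_n)\,d\mu = \langle g_{-m}\cdot f_m,\, g_{-n}\cdot f_n\rangle = \langle g_{n-m}\cdot f_m,\, f_n\rangle.\]

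Next I would split off the invariant part: write $f_n = f_n^0 + f_n'$ with $f_n^0 = \langle f_n,1\rangle\cdot 1 = \mu(\tar_n)\cdot 1$ and $f_n'\in \ltwo_0^\perp$ spherical. Expanding the last inner product and using $g_t\cdot 1 = 1$, that $f_n'\perp 1$, and that $\ltwo_0^\perp$ is $G$--invariant (hence $g_{n-m}\cdot f_m'\perp 1$), three of the four resulting terms vanish, leaving
\[\mu(E_m\cap E_n) - \mu(E_m)\mu(E_n) = \langle g_{n-m}\cdot f_m',\, f_n'\rangle,\]
an identity valid for all $m,n$ (note $\mu(E_n) = \mu(\tar_n)$). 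This clean reduction of the covariance to a single matrix coefficient of the mean-zero parts is the crux of the argument; everything after it is summation.

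It then remains to sum over $M \le m,n \le N$. The diagonal contributes $\sum_{n=M}^N \norm{f_n'}_2^2 = \sum_{n=M}^N (\mu(\tar_n) - \mu(\tar_n)^2) \le \sum_{n=M}^N \mu(\tar_n)$, using $\norm{f_n'}_2^2 = \norm{f_n}_2^2 - \norm{f_n^0}_2^2$. For the off-diagonal terms one has $\abs{n-m}\ge 1$, so \eqref{eqn:correlation_decay} applies and bounds each term by $\ll \norm{f_m'}_2\norm{f_n'}_2\abs{n-m}e^{-\abs{n-m}\sigma}$; here I would use $\norm{f_k'}_2^2 \le \mu(\tar_k)$ together with nestedness (so $\norm{f_m'}_2\norm{f_n'}_2 \le \mu(\tar_{\min(m,n)})$), then collect terms by $i = \abs{n-m}$ and sum the convergent series $\sum_{i\ge 1} i e^{-i\sigma}$ (finite since $\sigma>0$ depends only on $\mu$), obtaining that the off-diagonal sum is also $\ll_\mu \sum_{n=M}^N \mu(\tar_n)$. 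I do not expect a genuine obstacle: the only points requiring care are that indicators of spherical sets really are spherical vectors in $\ltwo$ (so that the clean spherical estimate \eqref{eqn:correlation_decay} is available in place of a messier non-spherical bound), and that the restriction $t\ge 1$ in that estimate forces the diagonal $m=n$ to be treated separately from the geometric sum.
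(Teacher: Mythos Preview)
Your proposal is correct and follows essentially the same argument as the paper: you express the covariance as the matrix coefficient $\langle g_{n-m}\cdot f_m',\,f_n'\rangle$, handle the diagonal via $\norm{f_n'}_2^2 \le \mu(\tar_n)$, and bound the off-diagonal terms with \eqref{eqn:correlation_decay} together with nestedness and the convergence of $\sum_{i\ge 1} ie^{-i\sigma}$. The paper's proof differs only in minor presentational details.
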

\begin{proof}
For $n\ge 1$, let $f_n = \chi_{B_n}$ be the characteristic function for the $n$th target, and note that $f_n$ is spherical.
Consider $f_n^0 = \langle f_n,1\rangle = \mu(\tar_n)$ and $f_n' = f_n - f_n^0\in \ltwo_0^\perp$. We have
\begin{align*}
\norm{f_n'}_2^2 
&= \langle f_n - f_n^0, f_n - f_n^0\rangle
= \langle f_n, f_n\rangle - \mu(\tar_n)\langle 1,f_n\rangle - \mu(\tar_n)\langle f_n, 1\rangle +  \langle f_n^0,f_n^0\rangle\\
&= \mu(\tar_n) - \mu(\tar_n)^2
\le \mu(\tar_n).
\end{align*}
Observe that $(g_{-n}\cdot f_n)(\xi) = \chi_{B_n}(g_n \xi)$ for all $\xi\in \udiff$, showing that $g_{-n}\cdot f_n = \chi_{E_n}$.
Hence
\begin{align*}
\mu(E_m\cap E_n) 
&= \langle g_{-m}\cdot f_m , g_{-n}\cdot f_n\rangle
= \langle f_m^0,f_n^0\rangle + \langle g_{n-m} f'_m, f'_n\rangle\\
&= \mu(\tar_m)\mu(\tar_n) + \langle g_{n-m} f'_m, f'_n\rangle
\end{align*}
for all $m,n\ge 1$. Using equation \eqref{eqn:correlation_decay} and the fact $\mu(E_n) = \mu(\tar_n)$,
it follows that 
\begin{align*}
R_{m,n}\colonequals
\Big(\mu(E_m\cap E_n) - \mu(E_m)\mu(E_n)\Big)  \ll \sqrt{\mu(\tar_m)\mu(\tar_n)}\abs{n-m}e^{-\abs{n-m}\sigma}
\end{align*}
whenever $\abs{n-m}\ge 1$. On the other hand, $R_{n,n} = \mu(E_n)(1-\mu(E_n))\le \mu(E_n)$ for all $n$.
Since the targets are shrinking, we also have $\mu(\tar_m) \le \mu(\tar_n)$ whenever $n\le  m$. Combining these observations, we find that
\begin{align*}
\sum_{m,n = M}^{N} R_{m,n}
&= \sum_{n = M}^{N} R_{n,n} + 2\sum_{n = M}^N \sum_{i = 1}^{N - n} R_{n,n+i}\\
&\ll \sum_{n = M}^{N} \mu(E_n) + 2\sum_{n = M}^N \sum_{i = 1}^{N - n} \sqrt{\mu(\tar_n)\mu(\tar_{n+i})}i e^{-i\sigma}\\
&\le \sum_{n = M}^{N} \mu(\tar_n) + 2\sum_{n = M}^N \mu(\tar_n)\sum_{i = 1}^{N - n} i e^{-i\sigma}\\
&\le \sum_{n = M}^{N} \mu(\tar_n)\left(1 + 2\sum_{i = 1}^\infty i e^{-i\sigma}\right).\qedhere
\end{align*}
\end{proof}

\subsection{Hitting the targets}
\label{sec:hit_the_targets}
\label{sec:proof_main_theorem}

We now use the estimates from \S\S\ref{sec:mean_ergodic}--\ref{sec:quasi-indep} to obtain our main result on shrinking targets. Let $\targs$ be any family of spherical $\mu$--measurable shrinking targets $\tar_1 \supset \tar_2 \supset\dotsb$ in $\udiff$, where $\mu$ remains the arbitrary $G$--invariant measure specified at the start of \S\ref{sec:shrinking_targets}. Set $E_n = g_n\inv(\tar_n)$ and consider the associated hitting set for $\targs$:
\[\hit_g(\targs) = \big\{\xi\in \udiff \mid \{n\in \N \mid g_n \xi\in \tar_n\}\text{ is infinite}\big\} = \bigcap_{k = 1}^\infty \bigcup _{n= k}^\infty E_n.\]

\begin{proof}[Proof of \Cref{thm:main}.\ref{maincase:summable}]
Suppose $\sum_{n=1}^\infty \mu(\tar_n) < \infty$. Then we have
\[\mu(\hit_g(\targs)) = \mu\left(\bigcap_{k=1}^\infty \bigcup_{n=k}^\infty E_n\right) 
\le \inf_{k\ge 1} \mu\left(\bigcup_{n=k}^\infty E_n\right) \le \inf_{k\ge 1}\sum_{n=k}^\infty \mu(\tar_n) = 0.\qedhere\]
\end{proof}

\begin{proof}[Proof of \Cref{thm:main}.\ref{maincase:inf_sum}]
Suppose that $\sum_{n=1}^\infty \mu(\tar_n) = \infty$. It has long been known, going back to the work of Schmidt, that the conclusion $\mu(\hit_g(\targs)) = 1$ may be deduced from this hypothesis and \Cref{prop:quasi-indep}; for example, \cite[Chapter 1, Lemma 10]{Sprindzuk} gives an even stronger conclusion. We include a brief proof here for completeness. Let $\Phi_N(x) = \sum_{n=1}^N \chi_{E_n}$ and $S_N = \int \Phi_N d\mu = \sum_{n=1}^N \mu(\tar_n)$. Hence $\xi\in \hit_g(\targs)$ iff $\{\Phi_N(\xi)\}$ is unbounded. Using \Cref{prop:quasi-indep}, we see that
\begin{align*}
\int (\Phi_N(\xi) - S_N)^2 d\mu(\xi)
&= \int \left(\sum_{m,n=1}^N \chi_{E_m}\chi_{E_n} - 2\mu(E_m) \chi_{E_n} + \mu(E_m)\mu(E_n)\right) d\mu\\
&= \sum_{m,n=1}^N \Big(\mu(E_m\cap E_n) - \mu(E_m)\mu(E_n)\Big)
\ll_\mu \sum_{n=1}^N \mu(\tar_n) = S_N.
\end{align*}
Therefore $\norm{\Phi_N - S_N}_2 \ll_\mu \sqrt{S_N}$. Setting $\Psi_N = \frac{1}{S_N}\Phi_N$, it follows that
\[\norm{\Psi_N -1 }_2 \ll_\mu \frac{1}{\sqrt{S_N}}.\]
Since $S_N \to \infty$ by assumption, we conclude that $\Psi_N$ converges in $\ltwo$ to the constant function $1$. After passing to a subsequence $N_i$, it follows that $\Psi_{N_i}(\xi) \to 1$ for $\mu$--almost-every $\xi\in \udiff$ (see, for example, \cite[Corollary 2.32]{Folland}). But by definition this implies $\Phi_{N_i}(\xi) \to \infty$ and hence $\xi\in \hit_g(\targs)$. Therefore $\hit_g(\targs)$ has full measure, as claimed.
\end{proof}

\begin{proof}[Proof of \Cref{thm:main}.\ref{maincase:unbounded}]
Suppose the sequence $\{n \mu(\tar_n)\}_{n\in \N}$ is unbounded. Let $f_n$ be the normalized characteristic function $f_n = \frac{1}{\mu(\tar_n)}\chi_{\tar_n}$. For the averaging operator $\beta_n$ from \eqref{eqn:avg_operator}, we have that
\[\beta_n(f_n)(\xi) = \frac{1}{n} \sum_{i=1}^{n} f_n(g_i \xi) = \frac{\#\{1\le i \le n \mid g_i \xi\in \tar_n\}}{n\mu(\tar_n)}.\]
Since $f_n^0=\langle f_n, 1\rangle = 1$ and $\norm{f_n}_2 = \frac{1}{\mu(\tar_n)}\norm{\chi_{\tar_n}}_2 = 1/\sqrt{\mu(\tar_n)}$, \Cref{th:emet} implies that
\[\norm{\beta_n(f_n) - 1}_2  \ll_\mu \frac{\norm{f_n}_2}{n^{1/2}} = \frac{1}{\sqrt{n\mu(\tar_n)}}.\]
Choosing a subsequence $n_j$ so that $n_j \mu(\tar_{n_j}) \to \infty$, it follows that $\beta_{n_j}(f_{n_j})$ converges to the constant function $1$ in $\ltwo$. Passing to a further subsequence, for $\mu$--almost-every $\xi\in \udiff$ we conclude that
\[\lim_{j \to \infty} \frac{\#\{1 \le i \le n_j \mid g_i \xi \in \tar_{n_j}\}}{n_j \mu(\tar_{n_j})} = \lim_{j \to \infty} \beta_{n_j}(f_{n_j})(\xi) = 1.\qedhere\]
\end{proof}

\subsection{Always hitting the targets}
\label{sec:always-hitt-targ}
We next turn to \Cref{thm:always_hit} and the question of when  differentials will eventually always hit the targets. Again, fix a shrinking family $\targs = \{\tar_n\}_{n\in \N}$ of $\mu$--measurable spherical targets. Let $f_n = \chi_{\tar_n}$ be the associated characteristic functions. For $n,m\in \N$, let
\[W_{m,n} = \big\{\xi\in \udiff \mid \{g_1 \xi,\dotsc,g_m \xi\}\cap \tar_n = \emptyset\big\}\]
be the set of differentials whose first $m$ iterates miss the target $\tar_n$. Then 
\begin{equation}
\label{eqn:always-hitting-set-and-W}
\eahit_g(\targs) = \{\xi\in \udiff\mid \xi\notin W_{n,n}\text{ for all sufficiently large }n\}
= \bigcup_{k=1}^\infty \bigcap_{n=k}^\infty (\udiff -  W_{n,n}).
\end{equation}

We know from \Cref{th:emet} that $\beta_m(f_n)$ converges to  the constant function $\mu(B_n) = \langle f_n, 1\rangle$ as $m\to \infty$. To analyze those differentials for which this convergence is poor, for each $\kappa > 1$ and $m,n\in \N$ let us set
\[Y^\kappa_{m,n} = \left\{\xi \in \udiff \;\Big\vert\;  \frac{\#\{1\le i\le m \mid g_i\xi\in B_n\}}{m} = \beta_m(f_n)(\xi)  \notin [\kappa\inv \mu(B_n) , \kappa \mu(B_n)]\right\}.\]
Now, for each $\xi\in W_{m,n}$ we have $\beta_m(f_n)(\xi) = 0$ by definition of $\beta_m$ and $W_{m,n}$. Therefore
\[\norm{\beta_m(f_n) - \mu(\tar_n)}_2^2 \ge \int_{W_{m,n}} \abs{\beta_m(f_n) - \mu(\tar_n)}^2 d\mu = \mu(W_{m,n})\mu(\tar_n)^2.\]
For each $\xi\in Y^\kappa_{m,n}$ we similarly have $\abs{\beta_m(f_n)(\xi) - \mu(B_n)} \ge (\kappa-1)\kappa\inv \mu(B_n)$ and hence
\[\norm{\beta_m(f_n) - \mu(B_n)}_2^2 \ge \int_{Y^\kappa_{m,n}} \abs{\beta_m(f_n) - \mu(B_n)}^2 \ge \mu(Y^\kappa_{m,n})\frac{(\kappa-1)^2}{\kappa^2}\mu(B_n)^2.\]
Since $\langle f_n,1\rangle = \mu(\tar_n) = \norm{f_n}_2^2$, the effective mean ergodic theorem (\Cref{th:emet}) now gives
\begin{equation}
\label{eqn:bound_W_and_Y}
\begin{aligned}
\mu(W_{m,n}) &\le \frac{\norm{\beta_m(f_n)- \mu(\tar_n)}_2^2}{\mu(\tar_n)^2} \ll_\mu \frac{\norm{f_n}_2^2}{m\mu(\tar_n)^2} = \frac{1}{m\mu(\tar_n)},\quad\text{and}\\
\mu(Y^\kappa_{m,n}) &\le \frac{\kappa^2\norm{\beta_m(f_n) - \mu(\tar_n)}_2^2}{(\kappa-1)^2 \mu(B_n)^2} \ll_\mu \frac{\kappa^2}{m(\kappa-1)^2\mu(\tar_n)}.
\end{aligned}
\end{equation}

\begin{proof}[Proof of \Cref{thm:always_hit}.\ref{alwayshit-fullmeas}]
Suppose $n_j$ is an increasing sequence in $\N$ with $\{(n_j \mu(\tar_{n_{j+1}}))\inv\}$  summable. Notice that whenever $n_{j} \le n \le n_{j+1}$, we have $W_{n,n} \subset W_{n_{j}, n_{j+1}}$ by definition of $W_{n,m}$ and the fact $\tar_{n_{j+1}} \subset \tar_n$. Consequently, using \eqref{eqn:always-hitting-set-and-W} we may write
\[\udiff - \eahit_g(\targs)
= \bigcap_{k=1}^\infty \bigcup_{n=k}^\infty W_{n,n} \subset \bigcap_{k=1}^\infty \bigcup_{n_j \ge k} W_{n_{j},n_{j+1}}.\]
The always hitting set $\eahit_g(\targs)$ therefore has full measure since by \eqref{eqn:bound_W_and_Y} its complement has 
\[\mu\big(\udiff - \eahit_{g}(\targs)\big) \le \inf_{k\ge 1}\left( \sum_{n_j \ge k} \mu(W_{n_{j},n_{j+1}})\right) \le \inf_{k\ge 1} \left(\sum_{n_j\ge k}\frac{1}{n_{j}\mu(\tar_{n_{j+1}})}\right) = 0.\qedhere\]
\end{proof}

\begin{proof}[Proof of \Cref{thm:always_hit}.\ref{alwayshit-aysmptotics}]
Suppose an increasing sequence $n_j$ in $\N$ with $\{(n_j \mu(\tar_{n_{j+1}}))\inv\}$  summable satisfies $n_{j+1}\mu(\tar_{n_j}) \le \lambda n_{j}\mu(\tar_{n_{j+1}})$ for some $\lambda > 1$. For any $\kappa > 1$, let 
\[Y^\kappa = \bigcap_{k=1}^\infty \bigcup_{n = k}^\infty \Big\{\xi\in \udiff \mid \beta_n(f_n)(\xi) \notin [\kappa\inv\mu(\tar_n), \kappa\mu(\tar_n)]\Big\} = \bigcap_{k=1}^\infty \bigcup_{n=k}^\infty Y^\kappa_{n,n}\]
denote the set of differentials $\xi$ such that for each $k\in \N$ there is some $n\ge k$ so that $\beta_n(f_n)(\xi)$ lies outside $[\kappa\inv\mu(\tar_n), \kappa\mu(\tar_n)]$. Then for each $\xi\notin Y^\kappa$ there exists some $K = K(\xi)$ such that for all $n\ge K$ we have
\[\frac{\#\{1 \le i \le n \mid g_i\xi \in \tar_n\}}{n \mu(\tar_n)} = \frac{\beta_n(f_n)(\xi)}{\mu(\tar_n)} \in [ \kappa\inv, \kappa].\]
Hence to prove the theorem it suffices to show $\mu(Y^{2\lambda}) = 0$.

For this, observe that for each $n_{j} \le n \le n_{j+1}$, the averaged function $\beta_n(f_n)$  satisfies
\[ \frac{n_{j}}{n_{j+1}} \beta_{n_{j}}(f_{n_{j+1}}) \le \frac{n_{j}}{n} \beta_{n_{j}}(f_n) \le \beta_{n}(f_n) \le \frac{n_{j+1}}{n}\beta_{n_{j+1}}(f_n) \le  \frac{n_{j+1}}{n_{j}} \beta_{n_{j+1}}(f_{n_{j}})\]
by the definition of $\beta_m$ and the fact that $0\le f_{n_{j+1}} \le f_n \le f_{n_{j}}$. Thus when  $\beta_n(f_n)(\xi) > \kappa\mu(\tar_n)$ we find that
\[\beta_{n_{j+1}}(f_{n_{j}})(\xi) \ge \frac{n_{j}}{n_{j+1}} \beta_n(f_n)(\xi)
> \frac{n_{j}}{n_{j+1}} \kappa \mu(\tar_n) 
\ge \frac{\kappa}{\lambda} \mu(\tar_{n_{j}})
\]
(here we have used our hypothesis  $\mu(\tar_n) \ge \mu(\tar_{n_{j+1}}) \ge \frac{n_{j+1}}{\lambda n_{j}}\mu(\tar_{n_{j}})$). On the other hand, when $\beta_n(f_n)(\xi) < \kappa\inv\mu(\tar_n)$ we have
\[\beta_{n_{j}}(f_{n_{j+1}})(\xi) 
\le \frac{n_{j+1}}{n_{j}}\beta_n(f_n)(\xi)
< \frac{n_{j+1}}{n_{j}\kappa}\mu(\tar_n)
\le \frac{\lambda}{\kappa} \mu(\tar_{n_{j+1}}). 
\]

Setting $\kappa = 2\lambda > 1$, it follows that $Y^{2\lambda}_{n,n}\subset Y^{2}_{n_{j},n_{j+1}}\cup Y^{2}_{n_{j+1},n_j}$ whenever $n_{j} \le n \le n_{j+1}$. 
Since the numbers $n_j$ are increasing and the targets $\tar_{n_{j}}$ are shrinking, we have that $n_j \mu(\tar_{n_{j+1}}) \le n_{j+1}\mu(\tar_{n_j})$ for all $j\in \N$. Therefore our hypothesis implies the sequences $\{(n_{j}\mu(\tar_{n_{j+1}}))\inv\}_{j\in\N}$ and $\{(n_{j + 1}\mu(\tar_{n_j}))\inv\}_{j\in\N}$ are \emph{both} summable.
Using these observations together with \eqref{eqn:bound_W_and_Y}, we now conclude the desired estimate
\begin{align*}
\mu(Y^{2\lambda})
&= \mu\left(\bigcap_{k=1}^\infty \bigcup_{n=k}^\infty Y^{2\lambda}_{n,n}\right)
\le \mu\left(\bigcap_{k=1}^\infty \bigcup_{n_j\ge k} Y^{2}_{n_{j},n_{j+1}}\cup Y^2_{n_{j+1},n_j}\right)\\
&\le \inf_{k\ge 1} \sum_{n_j \ge k}  \left(\frac{4}{n_j \mu(\tar_{n_{j+1}})} + \frac{4}{n_{j+1} \mu(\tar_{n_j})}\right) = 0.\qedhere
\end{align*}
\end{proof}

\section{Logarithm Laws}
\label{sec:logarithm-laws}
Here we explicitly consider the Masur--Veech measure $\lambda^1_\alpha$ on the principle stratum $\uqd(\alpha)=\udiff(\alpha)$ where $\alpha = (-1,\dotsc,-1,1,\dotsc,1;-1)$. Recall from \S\ref{sec:ball_measures} that we have used ${\sf n} = \pi_*(\lambda_\alpha^1)$ to denote the push-forward of measure under $\pi\colon \uqd\to\M$. Recall also that $d_\M(X,Y)$ denotes the distance between points $X$ and $Y$ in the moduli space $\M$ and that for $X\in \M$ and $\xi\in \udiff$ we have defined
\[d_n(\xi,X) = \min_{0\le j\le n} d_\M(\pi(g_j \xi),X)\quad\text{and}\quad
\tau_r(\xi,X) = \inf\{n\in \N \mid d_\M(\pi(g_n\xi),X) < r\}.\]

\begin{proof}[Proof of \Cref{thm:loglaw}]
For $\epsilon > 0$ fixed, set $r_m^\pm = m^{ \frac{-(1\pm \epsilon)}{6g-6+2p}}$ and let $\tar_m^{\pm} = \pi\inv(B_{X}(r_m^\pm))\subset \uqd$ be the preimage of the radius $r_m^\pm$ metric ball about $X$. Consider the shrinking families $\shrink^\pm = \{\tar_m^\pm\}_{m\in\N}$. By \Cref{lem:small_balls} we have
\[\lambda_\alpha^1(\tar_m^\pm) = {\sf n}(B_{X}(r_m^\pm)) \asymp_{X} (r_m^\pm)^{6g-6+2p} = \frac{1}{m^{1\pm \epsilon}}.\]
Therefore $\sum_{m=1}^\infty \lambda_\alpha^1(\tar_m^+)  < \infty$ and \Cref{thm:main}.\ref{maincase:summable} implies that $\lambda_\alpha^1(\hit_g(\shrink^+)) = 0$. The set $Z$ of $\xi\in \uqd$ such that $\pi(g_k \xi) = X$ of some $k\in \N$ also has measure zero. We claim that each $\xi\notin \hit_g(\shrink^+)\cup Z$ satisfies
\begin{equation}
\label{eqn:log_lower_bound}
\frac{\log(d_m(\xi,X))}{\log(m)} \ge \frac{-1-\epsilon}{6g-6+2p}\qquad\text{for all sufficiently large $m$}.
\end{equation}
Indeed if this is not the case we may choose indices $m_j$ tending to infinity and associated indices $k_j\le m_j$ such that 
\[0 < d_\M(\pi(g_{k_j} \xi),X) = \min_{k\le m_j} d(\pi(g_k \xi),X) =  d_{m_j}(\xi,X) < {m_j}^{\frac{-1-\epsilon}{6g-6+2p}} = r_{m_j}^+ \le r_{k_j}^+.\]
These inequalities imply that $g_{k_j}\xi\in \tar_{k_j}^+$ for each $j$ while also forcing (since $r_{m_j}^+\to 0$) the set $\{k_j \mid j\in \N\}$ to be infinite. However, this contradicts $\xi\notin \hit_g(\shrink^+)$, proving the claim.

For $\shrink^-$, on the other hand, \Cref{thm:always_hit}.\ref{alwayshit-fullmeas} implies that $\eahit_g(\shrink^-)$ has full measure since 
\[\sum_{j=1}^\infty \left(2^j \lambda_\alpha^1(\tar_{2^{j+1}}^-)\right)\inv 
\asymp_X \sum_{j=1}^\infty \frac{(2^{j+1})^{1-\epsilon}}{2^{j}}
= 2\sum_{j=1}^\infty (2^{-\epsilon})^{j+1} < \infty.
\]
For each $\xi\in \eahit_g(\shrink^-)$ we know that $\{g_1\xi,\dotsc, g_m \xi\}\cap \tar_m^-$ is nonempty for all sufficiently large $m$. But this is equivalent to saying
\begin{equation}
\label{eqn:log_upper_bound}
d_m(\xi,X) =  \min_{k\le m} d_\M(\pi(g_k\xi),X) < r_m^- = m^{\frac{-1+\epsilon}{6g-6+2p}}\qquad\text{ for all sufficiently large $m$}.
\end{equation}

Since \eqref{eqn:log_lower_bound} and \eqref{eqn:log_upper_bound} each hold for a full measure set, we conclude that
\[ \frac{-1-\epsilon}{6g-6+2p} 
\le \liminf_{m\to\infty} \frac{\log(d_m(\xi,X))}{\log(m)} 
\le \limsup_{m\to\infty} \frac{\log(d_m(\xi,X))}{\log(m)} 
\le \frac{-1+\epsilon}{6g-6+2p}\]
for $\lambda_\alpha^1$--almost every $\xi\in \uqd$. Since $\epsilon > 0$ was arbitrary, the first claim follows.

For notational convenience, for $y\in \R_+$, let us now define
\[d'_y(\xi,X) = d_{\floor{y}}(\xi,X) = \min\{d_\M(\pi(g_i\xi,X)) \mid i\in \Z\text{ with }0\le i \le y\}.\]
Notice that $\lim_{y\to\infty} \frac{\log(d'_y(\xi,X))}{\log(y)} = \lim_{m\to \infty} \frac{\log(d_m(\xi,X))}{\log(m)}$ since $\frac{\log y}{\log\floor{y}} \to 1$. Also observe from the definition of $\tau_r(\xi,X)$ that
\begin{equation}
\label{eqn:duality}
\tau_r(\xi,X) \le y \iff d'_y(\xi,X) < r
\qquad\text{and}\qquad
\tau_r(\xi,X) > y \iff d'_y(\xi,X)\ge r.
\end{equation}
By the first result, there is a $\lambda_\alpha^1$--full measure set of $\xi\in \uqd$ such that for any $\epsilon > 0$ we have
\[\frac{-(1+\epsilon)}{6g-6+2p} \le \frac{\log(d'_y(\xi,X))}{\log(y)} < \frac{-(1-\epsilon)}{6g-6+2p} \qquad\text{ for all sufficiently large $y$}.\]
Applying this to $y = r^{\frac{6g-6+2p}{-(1\pm \epsilon)}}$ (so that $r = y^{\frac{-(1\pm \epsilon)}{6g-6+2p}}$) and invoking \eqref{eqn:duality}, it follows that
\begin{align*}
r^{\frac{6g-6+2p}{-(1+\epsilon)}}<\tau_r(\xi,X) \le r^{\frac{6g-6+2p}{-(1-\epsilon)}}\qquad\text{ for all sufficiently small $r > 0$}.
\end{align*}
Rearranging, we conclude that there is a $\lambda_\alpha^1$--full measure set of $\xi\in \uqd$ for which
\[ \frac{6g-6+2p}{1+\epsilon}< \frac{\log(\tau_r(\xi,X))}{\log(1/r)} \le \frac{6g-6+2p}{1-\epsilon} \qquad\text{ holds for all sufficiently small $r > 0$.}\]
Since $\epsilon > 0$ is arbitrary, the result follows.
\end{proof}

\bibliographystyle{alphanum}
\bibliography{shrink_targets}

\bigskip

\noindent
\begin{minipage}{.55\linewidth}
Vanderbilt University\\
Department of Mathematics\\
1326 Stevenson Center\\
Nashville, TN 37240, USA\\
E-mail: {\tt spencer.dowdall@vanderbilt.edu}
\end{minipage}
\begin{minipage}{.45\linewidth}
University of Wisconsin\\
Department of Mathematics\\
480 Lincoln Drive\\
213 Van Vleck Hall\\
Madison, WI 53706\\
E-mail: {\tt work2@wisc.edu}
\end{minipage}

\end{document}